\documentclass[leqno,11pt]{article}

\usepackage[utf8]{inputenc}
\usepackage[T1]{fontenc}
\usepackage{microtype}

\usepackage[a4paper]{geometry}

\ifdefined\screenview
  \edef\mtht{\the\textheight}
  \edef\mtwd{\the\textwidth}
  \geometry{
    paperwidth=\dimexpr\mtwd+2.5em\relax,
    paperheight=\dimexpr\mtht+2.5em\relax,
    text={\mtwd,\mtht}
  }
\fi

\usepackage[dvipsnames]{xcolor}

\usepackage{amsmath}
\usepackage{amsthm}
\usepackage{tikz-cd}
\usetikzlibrary{arrows} 
\tikzset{
  commutative diagrams/.cd, 
  arrow style=tikz, 
  diagrams={>=stealth}
}
\usetikzlibrary{matrix,decorations.pathreplacing,calc}

\usepackage{textcomp}
\usepackage[sb]{libertine}
\usepackage[varqu,varl]{zi4}%
\usepackage[libertine,bigdelims,vvarbb]{newtxmath}
\usepackage[supstfm=libertinesups,supscaled=1.2,raised=-.13em]{superiors}
\useosf

\usepackage[scr=boondox,cal=euler]{mathalfa}

\usepackage{marvosym}

\usepackage{slashed}
\usepackage{esint} 
\usepackage[english]{babel}

\usepackage{imakeidx}
\indexsetup{noclearpage}
\makeindex[intoc]

\usepackage{csquotes}
\usepackage[
  backend=biber,
  hyperref=true,
  backref=true,
  isbn=false,
  doi=true,
  natbib=true,
  eprint=true,
  useprefix=true,
  maxcitenames=99,
  maxbibnames=99,  
  maxalphanames=99, 
  minalphanames=99,
  safeinputenc,
  style=alphabetic,
  citestyle=alphabetic,
  block=space,
  datamodel=preamble/ext-eprint
]{biblatex}
\usepackage[
  bookmarksnumbered = true,
  hypertexnames = false,
  colorlinks    = true,
  citecolor     = gray,
  linkcolor     = gray,
  urlcolor      = gray,
  breaklinks
]{hyperref}

\DeclareFieldFormat{url}{%
  \href{#1}{\ComputerMouse}
}
\DeclareFieldFormat{doi}{%
  \mkbibacro{DOI}\addcolon\space
  \href{https://doi.org/#1}{#1}
}
\makeatletter
\DeclareFieldFormat{arxiv}{%
  arXiv\addcolon\space
  \href{http://arxiv.org/\abx@arxivpath/#1}{#1}
}
\makeatother
\DeclareFieldFormat{mr}{%
  MR\addcolon\space
  \href{http://www.ams.org/mathscinet-getitem?mr=MR#1}{#1}
}
\DeclareFieldFormat{zbl}{%
  Zbl\addcolon\space
  \href{http://zbmath.org/?q=an:#1}{#1}
}
\renewbibmacro*{eprint}{%
  \printfield{arxiv}%
  \newunit\newblock
  \printfield{mr}%
  \newunit\newblock
  \printfield{zbl}%
  \newunit\newblock
  \iffieldundef{eprinttype}
  {\printfield{eprint}}
  {\printfield[eprint:\strfield{eprinttype}]{eprint}}
}

\AtEveryBibitem{%
  \clearlist{publisher}%
}
\AtEveryBibitem{%
  \clearlist{address}%
}
\DeclareFieldFormat[article,inproceedings,inbook,incollection,thesis]{title}{\textit{#1}}
\renewbibmacro{in:}{}
\addbibresource{preamble/refs.bib}
\newcommand{\printreferences}{\printbibliography[heading=bibintoc]}

\usepackage[inline,shortlabels]{enumitem}

\usepackage{subcaption}

\usepackage[yyyymmdd]{datetime}

\usepackage{etoolbox}
\ifundef{\abstract}{}{\patchcmd{\abstract}%
    {\quotation}{\quotation\noindent\ignorespaces}{}{}}

\usepackage[super]{nth}


\usepackage{thmtools}

\numberwithin{equation}{section}

\renewcommand{\qedsymbol}{$\blacksquare$}

\newcommand{\CorollaryQED}{\qedsymbol}
\newcommand{\ConjectureQED}{$\square$}
\newcommand{\SituationQED}{$\times$}
\newcommand{\DefinitionQED}{$\bullet$}
\newcommand{\NotationQED}{$\circ$}
\newcommand{\ExampleQED}{$\spadesuit$}
\newcommand{\RemarkQED}{$\clubsuit$}

\declaretheorem[numberlike=equation,]{theorem}
\declaretheorem[numbered=no,name=Theorem]{theorem*}

\declaretheorem[numberlike=equation,name=Proposition]{prop}

\declaretheorem[numberlike=equation,name=Definition,style=definition,qed=\DefinitionQED]{definition}
\declaretheorem[numbered=no,name=Definition,style=definition,qed=\DefinitionQED]{definition*}

\declaretheorem[numberlike=equation,style=definition,qed=\ExampleQED]{example}

\declaretheorem[numberlike=equation,style=remark,qed=\RemarkQED]{remark}

\def\makeautorefname#1#2{\AtBeginDocument{\expandafter\def\csname#1autorefname\endcsname{#2}}}
\makeautorefname{table}{Table}        
\makeautorefname{chapter}{Chapter}
\makeautorefname{section}{Section}
\makeautorefname{subsection}{Section}
\makeautorefname{subsubsection}{Section}
\makeautorefname{footnote}{Footnote}
\AtBeginDocument{\def\itemautorefname~#1\null{(#1)\null}}
\AtBeginDocument{\def\equationautorefname~#1\null{(#1)\null}}

\newtheorem{step}{Step}

\numberwithin{substep}{step}
\makeautorefname{step}{Step}
\makeautorefname{substep}{Step}

\makeautorefname{case}{Case}
\makeautorefname{substep}{Step}

\setlist[description]{leftmargin=!,labelindent=1em}
\setlist[enumerate]{label={\rm (\arabic*)},ref=\arabic*}
\setlist[enumerate,2]{label={\rm (\alph*)},ref=\theenumi.\alph*}
\let\C\undefined
\let\U\undefined

\usepackage{bm}
\usepackage{mathtools} 
\usepackage{stmaryrd} 

\DeclareFontFamily{U}{mathx}{\hyphenchar\font45}
\DeclareFontShape{U}{mathx}{m}{n}{
      <5> <6> <7> <8> <9> <10>
      <10.95> <12> <14.4> <17.28> <20.74> <24.88>
      mathx10
      }{}
\DeclareSymbolFont{mathx}{U}{mathx}{m}{n}
\DeclareFontSubstitution{U}{mathx}{m}{n}
\DeclareMathAccent{\widecheck}{0}{mathx}{"71}
\DeclareMathAccent{\wideparen}{0}{mathx}{"75}

\DeclareMathOperator{\End}{End}

\DeclareMathOperator{\HF}{\HF}

\DeclareMathOperator{\Hom}{Hom}

\DeclareMathOperator{\ad}{ad}

\DeclareMathOperator{\diag}{diag}

\DeclareMathOperator{\rk}{rk}

\DeclareMathOperator{\tr}{tr}

\DeclarePairedDelimiter\paren{\lparen}{\rparen}

\DeclarePairedDelimiter{\Abs}{\|}{\|}

\DeclarePairedDelimiter{\Inner}{\langle}{\rangle}
\DeclarePairedDelimiter{\abs}{\lvert}{\rvert}
\DeclarePairedDelimiter{\bracket}{\langle}{\rangle}

\DeclarePairedDelimiter{\set}{\lbrace}{\rbrace}
\def\({\left(}
\def\){\right)}
\def\<{\left\langle}
\def\>{\right\rangle}

\newcommand{\CP}{{\C P}}

\newcommand{\C}{{\mathbf{C}}}

\newcommand{\N}{{\mathbf{N}}}

\newcommand{\R}{\mathbf{R}}

\newcommand{\U}{\mathrm{U}}

\newcommand{\Z}{\mathbf{Z}}

\newcommand{\co}{\mskip0.5mu\colon\thinspace}

\newcommand{\defined}[2][\key]{\def\key{#2}\textbf{#2}\index{#1}}
\newcommand{\delbar}{\bar{\del}}

\newcommand{\del}{\partial}

\newcommand{\gl}{\mathfrak{gl}}

\newcommand{\id}{\mathrm{id}}

\newcommand{\inner}[2]{\bracket{#1, #2}}

\newcommand{\iso}{\cong}

\newcommand{\qandq}{\quad\text{and}\quad}

\newcommand{\qand}{\quad\text{and}}

\newcommand{\qwithq}{\quad\text{with}\quad}

\newcommand{\su}{\mathfrak{su}}

\newcommand{\vol}{\mathrm{vol}}

\renewcommand{\det}{\operatorname{det}}

\renewcommand{\epsilon}{\varepsilon}
\renewcommand{\setminus}{{\backslash}}

\renewcommand{\leq}{\leqslant}
\renewcommand{\geq}{\geqslant}

\makeatletter
\renewcommand*\env@matrix[1][*\c@MaxMatrixCols c]{%
  \hskip -\arraycolsep
  \let\@ifnextchar\new@ifnextchar
  \array{#1}}

\renewcommand\xleftrightarrow[2][]{%
  \ext@arrow 9999{\longleftrightarrowfill@}{#1}{#2}}
\newcommand\longleftrightarrowfill@{%
  \arrowfill@\leftarrow\relbar\rightarrow}
\makeatother



\newcommand{\rd}{{\rm d}}




\newcommand{\bk}{{\mathbf{k}}}

\newcommand{\bE}{{\mathbf{E}}}

\newcommand{\bH}{{\mathbf{H}}}

\newcommand{\bK}{{\mathbf{K}}}


\newcommand{\cN}{\mathcal{N}}

\newcommand{\sA}{\mathscr{A}}

\newcommand{\sC}{\mathscr{C}}

\newcommand{\sE}{\mathscr{E}}
\newcommand{\sF}{\mathscr{F}}
\newcommand{\sG}{\mathscr{G}}

\newcommand{\sM}{\mathscr{M}}

\newcommand{\sO}{\mathscr{O}}


\newcommand{\fd}{{\mathfrak d}}

\newcommand{\fm}{{\mathfrak m}}

\newcommand{\fu}{{\mathfrak u}}

\newcommand{\fV}{{\mathfrak V}}




%


\newcommand{\EBE}{\text{EBE}}
\newcommand{\Hecke}{\text{Hecke}}
\newcommand{\pHecke}{{\widetilde\Hecke}}

\newcommand{\bdelbar}{{\bm\delbar}}
\newcommand{\bvarphi}{{\bm\varphi}}


\author{Siqi He \and Thomas Walpuski}

\title{
  Hecke modifications of Higgs bundles and the extended Bogomolny equation
}

\begin{document}

\maketitle

\begin{abstract}
  We establish a Kobayashi--Hitchin correspondence between solutions of the extended Bogomolny equation with a Dirac type singularity and Hecke modifications of Higgs bundles.
  This correspondence was conjectured by \citet[p. 668]{Witten2015} and plays an important role in the physical description of the the geometric Langlands program in terms of $S$--duality for $\cN = 4$ super Yang--Mills theory in four dimensions.
\end{abstract}

\section{Introduction}
\label{Sec_Introduction}

\citet{Kapustin2007} describe the geometric Langlands program in terms of $S$--duality for $\cN = 4$ super Yang--Mills theory in four dimensions.
At the heart of their description lies the observation that every solution of the Bogomolny equation with a Dirac type singularity on $[0,1]\times \Sigma$ gives rise to a Hecke modification of a holomorphic bundle over the Riemann surface $\Sigma$ via a scattering map construction \cites[Section 9]{Kapustin2007}{Hurtubise1985}.
Moreover, they anticipated that this construction establishes a bijection between a suitable moduli space of singular monopoles and the moduli space of Hecke modifications---similar to the Kobayashi--Hitchin correspondence \cite{Donaldson1985,Donaldson1986,Uhlenbeck1986,Luebke1995}.
Their conjecture has been proved by \citet{Norbury2011}; see also \citet{Charbonneau2011, Mochizuki2017a}.

In a recent article, \citet{Witten2015} elaborates on the physical description of the geometric Langlands program and emphasizes the importance of the relation between solutions to the \emph{extended} Bogomolny equation with a Dirac type singularity on $[0,1]\times \Sigma$ and Hecke modifications of \emph{Higgs} bundles.
While Hecke modifications of holomorphic bundles have been studied intensely for quite some time (see, e.g., \cite{Pressley1986,Zhu2016}),
interest in Hecke modifications of Higgs bundles has only emerged recently.
They do appear, for example, in \citeauthor{Nakajima2017}'s recent work on a mathematical definition of Coulomb branches of $3$--dimensional $\cN = 4$ gauge theories \cite[Section 3]{Nakajima2017}.

The purpose of this article is to
\begin{enumerate*}[(a)]
\item
  give a precise statement of the Kobayashi--Hitchin correspondence conjectured by Witten and
\item
  establish this correspondence.
\end{enumerate*}
The upcoming four sections review the notion of a Hecke modification of a Higgs bundle,
the extended Bogomolny equation,
Dirac type singularities, and
the scattering map construction.
The main result of this article is stated as \autoref{Thm_EBE=Hecke}.
The remaining five sections contain the proof of this result.

Our proof, like \citeauthor{Norbury2011}'s, heavily relies on the work of \citet{Simpson1988}.
However, unlike \citeauthor{Norbury2011},
we cannot make use of the extensive prior work on Dirac type singularities for solutions of the Bogomolny equation \cite{Kronheimer1985,Pauly1998,Mochizuki2017}.
Instead, our singularity analysis is based on ideas from recent work on tangent cones of singular Hermitian Yang--Mills connections \cite{Jacob2016a, Chen2017}.
\autoref{Thm_EBE=Hecke} can be easily generalized to a Kobayashi--Hitchin correspondence between solutions of the extended Bogomolny equation with multiple Dirac type singularities and sequences of Hecke modifications of Higgs bundles.
This result is stated as \autoref{Thm_EBE=Hecke_Sequences} and proved in \autoref{Sec_SequencesOfHeckeModifications}.
Moreover, although we do not provide details here, both of these results can be further generalized to $G^\C$ Higgs bundles by fixing an embedding $G \subset \U(r)$,
see \cite[Proof of Proposition 8.2]{Simpson1988}.

\paragraph{Acknowledgements}

Siqi He thanks Xinwen Zhu, Rafe Mazzeo, Sergey Cherkis, and Song Sun for helpful discussions.
Thomas Walpuski thanks Gorapada Bera and Gonçalo Oliveira for carefully reading a draft of the present article.
This material is based upon work supported by  \href{https://www.nsf.gov/awardsearch/showAward?AWD_ID=1754967&HistoricalAwards=false}{the National Science Foundation under Grant No.~1754967}
and
\href{https://sloan.org/grant-detail/8651}{an Alfred P. Sloan Research Fellowship}.


\section{Hecke modifications of Higgs bundles}
\label{Sec_HeckeModifications}

In this section, we briefly recall the notion of a Hecke modification of a Higgs bundle.
We refer the reader to \cites[Section 4.5]{Witten2015} for a more extensive discussion.
Throughout this section, let $(\Sigma,I)$ be a closed Riemann surface and denote its canonical bundle by $K_\Sigma$.

\begin{definition}
  A \defined{Higgs bundle} over $\Sigma$ is a pair $(\sE,\varphi)$ consisting of a holomorphic vector bundle $\sE$ over $\Sigma$ and a holomorphic $1$--form $\varphi \in H^0(\Sigma,K_\Sigma\otimes\End(\sE))$ with values in $\End(\sE)$.
\end{definition}

Let $(E,H)$ be a Hermitian vector bundle over $\Sigma$.
Given a holomorphic structure $\delbar$ on $E$,
there exists a unique unitary connection $A \in \sA(E,H)$ satisfying
\begin{equation*}
  \nabla_A^{0,1} = \delbar;
\end{equation*}
see, e.g., \cite[Section 6]{Chern1995}.
Furthermore, every $\varphi \in \Omega^{1,0}(\Sigma,\End(E))$ can uniquely be written as
\begin{equation*}
  \varphi = \frac12(\phi - iI\phi)
\end{equation*}
with $\phi \in \Omega^1(\Sigma,\fu(E,H))$.
Here $I$ is the complex structure on $\Sigma$ and $\fu(E,H)$ denotes the bundle of skew-Hermitian endomorphism of $(E,H)$.
It follows from the Kähler identities that $\varphi$ is holomorphic if and only if
\begin{equation}
  \label{Eq_PhiHarmonic}
  \rd_A \phi = 0 \qandq
  \rd_A^* \phi = 0.
\end{equation}

\begin{remark}
  \citet[Theorem 2.1 and Theorem 4.3]{Hitchin1987a} proved that a Higgs bundle $(\sE,\varphi)$ of rank $r \coloneq \rk \sE$ admits a Hermitian metric $H$ such that $(A,\phi)$ satisfies \defined{Hitchin's equation}
  \begin{equation}
    \label{Eq_Hitchin}
    F_A^\circ - \frac12[\phi\wedge \phi] = 0, \quad
    \rd_A \phi = 0, \qandq
    \rd_A^* \phi = 0
  \end{equation}
  if and only if it is $\mu$--polystable.
  Here $F_A^\circ \coloneq F_A- \frac1r\tr(F_A)\id_E$.
  Furthermore,
  if $(\sE,\varphi)$ is $\mu$--stable,
  then imposing the additional condition that $H$ induces a given Hermitian metric on $\det \sE$ makes it unique.
\end{remark}

\begin{definition}
  Let $(\sE,\varphi)$ be a Higgs bundle over $\Sigma$ of rank $r$.  
  Let $z_0 \in \Sigma$ and $\bk = (k_1,\ldots,k_r) \in \Z^r$ satisfying
  \begin{equation}
    \label{Eq_KIncreasing}
    k_1 \leq k_2 \leq \cdots \leq k_r.
  \end{equation}
  A \defined{Hecke modification} of $(\sE,\varphi)$ at $z_0$ of type $\bk$ is a Higgs bundle $(\sF,\chi)$ over $\Sigma$ together with an isomorphism
  \begin{equation*}
    \eta\co (\sE,\varphi)|_{\Sigma\setminus\set{z_0}} \iso (\sF,\chi)|_{\Sigma\setminus\set{z_0}} 
  \end{equation*}
  of Higgs bundles which, in suitable holomorphic trivializations near $z_0$, is given by
  \begin{equation*}
    \diag(z^{k_1},\ldots,z^{k_r}).
  \end{equation*}
  An \defined{isomorphism} between two Hecke modifications $(\sF_1,\chi_1;\eta_1)$ and $(\sF_2,\chi_2;\eta_2)$ of $(\sE,\varphi)$ is an isomorphism
  \begin{equation*}
    \zeta\co (\sF_1,\chi_1) \to (\sF_2,\chi_2)
  \end{equation*}
  such that
  \begin{equation*}
    \eta_1 = \eta_2\zeta.
  \end{equation*}
  We denote by
  \begin{equation*}
    \sM^\Hecke(\sE,\varphi;z_0,\bk)
  \end{equation*}
  the set of all isomorphism classes of Hecke modifications of $(\sE,\varphi)$ at $z_0$ of type $\bk$.
\end{definition}

\begin{remark}
  If $\varphi = 0$, then the above reduces to the classical notion of a Hecke modification of a holomorphic vector bundle.
\end{remark}


\section{Singular solutions of the extended Bogomolny equation}
\label{Sec_SingularEBE}

Throughout this section,
let $M$ be an oriented Riemannian $3$--manifold (possibly with boundary) and
let $(E,H)$ be a Hermitian vector bundle over $M$.

\begin{definition}
  \label{Def_EBE}
  The \defined{extended Bogomolny equation} is the following partial differential equation for $A \in \sA(E,H)$, $\phi \in \Omega^1(M,\fu(E,H))$, and $\xi \in \Omega^0(M,\fu(E,H))$:
  \stepcounter{equation}
  \begin{align*}
    \label{Eq_EBE}
    F_A - \frac12[\phi\wedge\phi] &= *\rd_A\xi, \\
    \tag{\theequation}
    \rd_A\phi - *[\xi,\phi] &= 0, \qand \\
    \rd_A^*\phi &= 0.
                  \qedhere
  \end{align*}
\end{definition}

\begin{remark}
  The extended Bogomolny equation arises from the Kapustin--Witten equation \cite{Kapustin2007} by dimensional reduction.
  It can be thought of as a complexification of the Bogomolny equation.
  In fact, for $\phi = 0$, it reduces to the Bogomolny equation.
\end{remark}

In this article, we are exclusively concerned with singular solutions of \autoref{Eq_EBE}.
The following example is archetypical.

\begin{example}
  \label{Ex_DiracMonopole}
  Let $k \in \Z$.
  The holomorphic line bundle $\sO_{\CP^1}(k) \to \CP^1 \iso S^2$ admits a metric $H_k$ whose associated connection $B_k$ satisfies
  \begin{equation*}
    F_{B_k} = -\frac{ik}{2}\vol_{S^2}.
  \end{equation*}
  Denote by $\pi\co \R^3\setminus\set{0} \to S^2$ the projection map and denote by $r\co \R^3 \to [0,\infty)$ the distance to the origin.

  Given $\bk \in \Z^r$ satisfying \autoref{Eq_KIncreasing},
  set
  \begin{equation*}
    (E_\bk,H_\bk) \coloneq \bigoplus_{i=1}^r \pi^*(\sO_{\CP^1}(k_i),H_{k_i}), \quad
    A_\bk \coloneq \bigoplus_{i=1}^r \pi^*B_k, \qandq
    \xi_\bk \coloneq \frac{1}{2r}\diag(ik_1,\ldots,ik_r).
  \end{equation*}
  The pair $(A_\bk,\xi_\bk)$ is called the \defined{Dirac monopole} of type $\bk$.
  It satisfies the \defined{Bogomolny equation}
  \begin{equation*}
    F_{A_\bk} = *\rd_{A_\bk} \xi_\bk
  \end{equation*}
  and thus \autoref{Eq_EBE} with $\phi = 0$.
\end{example}

Henceforth,
we suppose that $\bar M$ is an oriented Riemannian $3$--manifold, $p \in \bar M$ is an interior point, and $M$ is the complement of $p$ in $\bar M$.
Define $r\co M \to (0,\infty)$ by
\begin{equation*}
  r(x) \coloneq d(x,p).
\end{equation*}
Furthermore,
we fix $\bk \in \Z^r$ satisfying \autoref{Eq_KIncreasing}.

\begin{definition}
  \label{Def_Framing}
  A \defined{framing} of $(E,H)$ at $p$ of type $\bk$ is an isometry of Hermitian vector bundles
  \begin{equation*}
    \Psi\co \exp_p^*(E,H)|_{B_\rho(0)} \to (E_\bk,H_\bk)|_{B_\rho(0)}
  \end{equation*}
  for some $\rho > 0$.
\end{definition}

\begin{definition}
  \label{Def_SingularityPreserving}
  Let $\Psi$ be a framing of $(E,H)$ at $p$ of type $\bk$.
  A solution $(A,\phi,\xi)$ of \autoref{Eq_EBE} on $(E,H)$ is said to have a \defined{Dirac type singularity} at $p$ of type $\bk$ if there exists an $\alpha > 0$ such that for every $k \in \N_0$
  \begin{equation*}
    \nabla_{A_\bk}^k(\Psi_*A - A_\bk) = O(r^{-k-1+\alpha}), \quad
    \nabla_{A_\bk}^k\Psi_*\phi = O(r^{-k}), \qandq
    \nabla_{A_\bk}^k(\Psi_*\xi - \xi_\bk) = O(r^{-k-1+\alpha}).
  \end{equation*}
  A gauge transformation $u \in \sG(E,H)$ is called \defined{singularity preserving} if there exists a $u_p \in \sG(E_\bk,H_\bk)$ satisfying
  \begin{equation*}
    \nabla_{A_\bk} u_p = 0 \qandq
    (u_p)_*\xi_\bk = \xi_\bk
  \end{equation*}
  and an $\alpha > 0$ such that for every $k \in \N_0$
  \begin{equation*}
    \nabla_{A_\bk}^k(\Psi_*u - u_p) = O(r^{-k+\alpha}).
    \qedhere
  \end{equation*}
\end{definition}


\section{The extended Bogomolny equation over $[0,1]\times \Sigma$}
\label{Sec_EBEIxSigma}

Throughout the remainder of this article,
we assume that the following are given:
\begin{enumerate}
\item
  a closed Riemann surface $(\Sigma,I)$,
\item
  a Hermitian vector bundle $(E_0,H_0)$ over $\Sigma$,
\item
  a solution  $(A_0,\phi_0)$ of \autoref{Eq_PhiHarmonic},
\item
  $(y_0,z_0) \in (0,1)\times \Sigma$, and
\item
  $\bk \in \Z^r$ satisfying \autoref{Eq_KIncreasing}.
\end{enumerate}
Set
\begin{equation*}
  M \coloneq [0,1]\times\Sigma \setminus \set{(y_0,z_0)}
\end{equation*}

\begin{prop}
  \label{Prop_ConstructE}
  Given the above data,
  there exists a Hermitian vector bundle $(E,H)$ over $M$ whose restriction to $\set{0}\times\Sigma$ is isomorphic to $(E_0,H_0)$ together with a framing $\Psi$ at $(y_0,z_0)$ of type $\bk$.
  Moreover, any two such $(E,H;\Psi)$ are isomorphic.  
\end{prop}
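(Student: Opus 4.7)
The plan is to assemble $(E,H;\Psi)$ by gluing the Dirac monopole bundle near $(y_0,z_0)$ to a Hermitian bundle on the complement whose topology is dictated by $(E_0,H_0)$ and $\bk$. The construction is essentially topological: rank-$r$ Hermitian bundles on a three-dimensional CW complex are classified up to isomorphism by rank and first Chern class in $H^2(\cdot,\Z)$, because the higher obstructions lie in $H^i(\cdot,\pi_{i-1}\U(r))$ and $\pi_2\U(r)=0$.

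Choose $\rho>0$ so small that $\exp_{(y_0,z_0)}$ identifies $B_\rho(0)\subset\R^3$ diffeomorphically with a geodesic ball $U$ about $(y_0,z_0)$. Put $U^*\coloneq U\setminus\set{(y_0,z_0)}$ and $V\coloneq [0,1]\times\Sigma\setminus\exp(\overline{B_{\rho/2}(0)})$, so that $M=U^*\cup V$ and the overlap $U^*\cap V$ is homotopy equivalent to a two-sphere $S^2$. A Mayer--Vietoris computation for the cover $[0,1]\times\Sigma=V\cup U$ produces a short exact sequence
\begin{equation*}
  0\longrightarrow H^2(\Sigma,\Z)\longrightarrow H^2(V,\Z)\longrightarrow H^2(S^2,\Z)\longrightarrow 0,
\end{equation*}
and with it the fact that the restriction map $H^2(V,\Z)\to H^2(\{0\}\times\Sigma,\Z)\oplus H^2(S^2,\Z)$ is an isomorphism. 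Pick a Hermitian line bundle $L\to V$ whose first Chern class restricts to $c_1(\det E_0)$ on $\{0\}\times\Sigma$ and to $\sum_ik_i$ on $S^2$, and set $F\coloneq L\oplus\C^{r-1}$ with its standard Hermitian metric. On $U^*\cap V$, both $F$ and $\exp^*(E_\bk,H_\bk)$ are rank-$r$ Hermitian bundles of first Chern class $\sum_ik_i$, hence unitarily isomorphic; fix any such isomorphism and glue to produce $(E,H)$ on $M$. By construction $E|_{\{0\}\times\Sigma}$ has the same rank and $c_1$ as $E_0$, so it is isometric to $(E_0,H_0)$, and the tautological identification on $U^*$ supplies the framing $\Psi$.

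For uniqueness, let $(E',H';\Psi')$ be a second such triple. The composition $(\Psi')^{-1}\circ\Psi$ is a Hermitian isometry $E\iso E'$ on $U^*$. Since $E$ and $E'$ have the same rank and the same $c_1\in H^2(M,\Z)$---both pinned down by $c_1(E_0)$ on the $\Sigma$-factor and $\sum_ik_i$ on the link of $(y_0,z_0)$---they are globally unitarily isomorphic by the classification invoked above. A cutoff argument interpolates between the local framing-induced isometry on $U^*$ and a global topological isomorphism, producing an isometry $E\iso E'$ that agrees with the framings near $(y_0,z_0)$ up to a singularity-preserving gauge transformation.

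The chief obstacle is cohomological bookkeeping: verifying that the restriction map in $H^2$ is an isomorphism, and that rank-$r$ Hermitian bundles on a three-manifold are classified by $c_1$ alone. Both facts are standard consequences of obstruction theory for maps into $BU(r)$ on a three-dimensional CW complex; the only care required is in the choice of CW structure and in arranging that the restriction of the gluing cocycle to the overlap is a genuine isometry rather than merely a bundle isomorphism, which is handled by polar decomposition.
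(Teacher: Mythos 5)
Your proof is correct, but it takes a different route from the paper. The paper works fibrewise over $\Sigma$: it first performs the Hecke modification at the level of the Riemann surface, constructing a bundle $E_1$ over $\Sigma$ and a clutching isomorphism $\eta\co E_0|_{\Sigma\setminus\{z_0\}} \iso E_1|_{\Sigma\setminus\{z_0\}}$ of the form $\diag(z^{k_1},\ldots,z^{k_r})$ by modifying a \v{C}ech cocycle for $E_0$, and then glues the pullback of $E_0$ over $[0,y_0]\times\Sigma\setminus\{(y_0,z_0)\}$ to the pullback of $E_1$ over $[y_0,1]\times\Sigma\setminus\{(y_0,z_0)\}$ via $\eta$; the framing comes from observing that the resulting bundle is isomorphic to $E_\bk$ near the puncture. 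You instead decompose $M$ into a punctured ball and its complement, invoke the classification of rank-$r$ complex (equivalently Hermitian, via polar decomposition) bundles on complexes of dimension at most three by rank and $c_1$ (using $\pi_2\U(r)=0$), pin down the bundle on the complement by a Mayer--Vietoris computation, and glue in the pulled-back Dirac monopole bundle directly. What each buys: the paper's construction exhibits $E$ explicitly as a product bundle on each half $[0,y_0]\times\Sigma$ and $[y_0,1]\times\Sigma$, which is the form used later (in \autoref{Prop_ParametrizingHeckeModifications}) to parametrize Hecke modifications, whereas your obstruction-theoretic argument gives a cleaner and more explicit treatment of the uniqueness clause, which the paper's proof leaves implicit. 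Two small points to tighten: (i) your claim that $c_1(E)\in H^2(M;\Z)$ is determined by its restrictions to $\{0\}\times\Sigma$ and to the link sphere needs the same Mayer--Vietoris argument run for $M=U^*\cup V$ (not only for $V$), which works since $H^1(S^2;\Z)=0$ gives injectivity of $H^2(M;\Z)\to H^2(U^*;\Z)\oplus H^2(V;\Z)$; and (ii) the ``cutoff argument'' for matching the global isomorphism with $(\Psi')^{-1}\circ\Psi$ near the puncture tacitly uses that any unitary automorphism of a rank-$r$ bundle over (a region homotopy equivalent to) $S^2$ is homotopic to the identity, which again follows from $H^1(S^2;\pi_1\U(r))=0$ and $\pi_2\U(r)=0$; both gaps are filled by exactly the homotopy-theoretic facts you already invoke.
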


\begin{proof}
  There is a complex vector bundle $E_1$ over $\Sigma$ together with an isomorphism $\eta\co E_0|_{\Sigma\setminus\set{z_0}} \iso E_1|_{\Sigma\setminus\set{z_0}}$ which can be written as $\diag(z^{k_1},\ldots,z^{k_r})$ in suitable trivializations around $z_0$.
  One can construct $E_1$ and $\eta$, for example, by modifying a Čech cocycle representing $E_0$.
  The complex vector bundle $E$ is now constructed by gluing via $\eta$ the pullback of  $E_0$ to $[0,y_0]\times \Sigma \setminus \set{(y_0,z_0)}$ and the pullback of $E_1$ to $[y_0,1]\times \Sigma \setminus \set{(y_0,z_0)}$.
  Since $E$ is isomorphic near $(y_0,z_0)$ to $E_\bk$,
  we can find the desired Hermitian metric $H$ and framing $\Psi$.
\end{proof}

Henceforth, we fix a choice of
\begin{equation*}
  (E,H;\Psi).
\end{equation*}

\begin{definition}
  \label{Def_MEBE}
  Denote by $\sC^\EBE(A_0,\phi_0;y_0,z_0,\bk)$ the set of triples $A \in \sA(E,H)$, $\phi \in \Omega^1(M,\fu(E,H))$, and $\xi \in \Omega^0(M,\fu(E,H))$ satisfying the extended Bogomolny equation \autoref{Eq_EBE}, as well as
  \begin{equation}
    \label{Eq_IDelYPhi=0}
    i(\del_y)\phi = 0,
  \end{equation}
  and the boundary conditions
  \begin{equation}
    \label{Eq_BoundaryConditions}
    A|_{\set{0}\times\Sigma} = A_0, \quad
    \phi|_{\set{0}\times\Sigma} = \phi_0, \qandq
    \xi|_{\set{1}\times\Sigma} = 0.
  \end{equation}
  Denote by
  \begin{equation*}
    \sG \subset \sG(E,H)
  \end{equation*}
  the subgroup of singularity preserving unitary gauge transformations of $(E,H)$ which restrict to the identity on $\set{0}\times \Sigma$.
  Set
  \begin{equation*}
    \sM^\EBE(A_0,\phi_0;y_0,z_0,\bk)
    \coloneq
    \sC^\EBE(A_0,\phi_0;y_0,z_0,\bk)/\sG.
    \qedhere
  \end{equation*}
\end{definition}

\begin{remark}
  It is an interesting question to ask whether the condition \autoref{Eq_IDelYPhi=0} really does need to be imposed.
  In a variant of our setup on $S^1\times \Sigma$, this condition is automatically satisfied;
  see \cite[Corollary 4.7]{He2017}.
\end{remark}

\begin{remark}
  We refer the reader to \cite[Section 10.1]{Kapustin2007} for a discussion of the significance of the boundary conditions \autoref{Eq_BoundaryConditions}.
  It will become apparent in \autoref{Sec_HermitianMetrics} and \autoref{Eq_XiKes},
  that the boundary conditions on $(A,\varphi,\xi)$ correspond to Dirichlet and Neumann boundary conditions on a Hermitian metric.
\end{remark}

\begin{prop}
  \label{Prop_SplitEBEIntoHolomorphic+Metric}
  Let $A \in \sA(E,H)$, $\phi \in \Omega^1(M,\fu(E,H))$, and $\xi \in \Omega^0(M,\fu(E,H))$ and suppose that \eqref{Eq_IDelYPhi=0} holds.
  Decompose $A$ as
  \begin{equation*}
    \nabla_A = \del_A + \delbar_A + \rd y \wedge \nabla_{A,\del_y}
  \end{equation*}
  and write
  \begin{equation*}
    \phi = \varphi - \varphi^*
    \qwithq
    \varphi \coloneq \frac12(\phi - iI\phi)
    \in \Gamma(\pi_\Sigma^*T^*\Sigma^{1,0}\otimes\End(E)).%
    \footnote{%
      This is possible because of \autoref{Eq_IDelYPhi=0}.
    }
  \end{equation*}
  Set
  \begin{equation*}
    \fd_y \coloneq \nabla_{A,\del_y} - i\xi.
  \end{equation*}
  The extended Bogomolny equation \autoref{Eq_EBE} holds if and only if
  \begin{gather}
    \label{Eq_EBE_Holomorphic}
    \delbar_A\varphi = 0, \quad
    [\fd_y,\delbar_A] = 0, \quad
    \fd_y\varphi = 0, \qandq \\
    \label{Eq_EBE_Metric}
    i\Lambda(F_A + [\varphi\wedge\varphi^*]) - i\nabla_{A,\del_y} \xi = 0.
  \end{gather}
\end{prop}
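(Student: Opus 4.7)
The plan is to unpack each equation in \eqref{Eq_EBE} along two orthogonal splittings: the product splitting $T^*M = \R\,\rd y \oplus \pi_\Sigma^* T^*\Sigma$, and the Hodge bigrading on $\Sigma$ induced by $I$. Matching components produces exactly the four equations in \eqref{Eq_EBE_Holomorphic}--\eqref{Eq_EBE_Metric}; because this is an orthogonal decomposition, the equivalence runs in both directions at once.

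First I would simplify the quadratic term. Since $\Sigma$ is a Riemann surface, $\Omega^{2,0}(\Sigma) = \Omega^{0,2}(\Sigma) = 0$, so $[\varphi\wedge\varphi] = [\varphi^*\wedge\varphi^*] = 0$; the remaining cross-terms satisfy $[\varphi^*\wedge\varphi] = [\varphi\wedge\varphi^*]$, giving $[\phi\wedge\phi] = -2[\varphi\wedge\varphi^*]$. The first equation in \eqref{Eq_EBE} then becomes $F_A + [\varphi\wedge\varphi^*] = *\rd_A\xi$. Decompose $F_A = [\del_A,\delbar_A] + \rd y\wedge[\nabla_{A,\del_y},\del_A+\delbar_A]$ (the pure-$\Sigma$ $(2,0)$ and $(0,2)$ pieces vanish for type reasons) and $*\rd_A\xi = (\nabla_{A,\del_y}\xi)\omega_\Sigma + \rd y\wedge *_\Sigma\rd_A^\Sigma\xi$. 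The $(1,1)$-part on $\Sigma$ reads $[\del_A,\delbar_A] + [\varphi\wedge\varphi^*] = (\nabla_{A,\del_y}\xi)\omega_\Sigma$, which is exactly \eqref{Eq_EBE_Metric} after applying $i\Lambda$ (using $\Lambda\omega_\Sigma = 1$). Using $*_\Sigma = i$ on $\Omega^{0,1}$, the $\rd y\wedge(0,1)$-piece rearranges to $[\nabla_{A,\del_y},\delbar_A] = -i\delbar_A\xi$; since $[\xi,\delbar_A] = -\delbar_A\xi$ as operators on sections, this is equivalent to $[\fd_y,\delbar_A] = [\nabla_{A,\del_y},\delbar_A] + i\delbar_A\xi = 0$. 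The $\rd y\wedge(1,0)$-component is the Hermitian adjoint of this and hence carries no new information.

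For the second and third equations of \eqref{Eq_EBE}, since $\del_A\varphi$ and $\delbar_A\varphi^*$ vanish for type reasons,
\begin{equation*}
  \rd_A\phi = \delbar_A\varphi - \del_A\varphi^* + \rd y\wedge\nabla_{A,\del_y}(\varphi - \varphi^*),
\end{equation*}
while $*[\xi,\phi]$ lies in $\rd y\wedge\pi_\Sigma^*T^*\Sigma$ and $\rd_A^*\phi$ is a $0$-form combining $y$- and $\Sigma$-direction pieces. The pure-$\Sigma$ $(1,1)$-component of $\rd_A\phi - *[\xi,\phi] = 0$ is $\delbar_A\varphi = 0$ (the conjugate $\del_A\varphi^* = 0$ being automatic), while the $\rd y\wedge(1,0)$-component combined with the $(1,0)$-piece of $\rd_A^*\phi = 0$ rearranges to $(\nabla_{A,\del_y}-i\xi)\varphi = \fd_y\varphi = 0$; the $(0,1)$-piece is again the adjoint. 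The only delicate aspect is bookkeeping of Hodge-star signs ($*_\Sigma = \mp i$ on $\Omega^{1,0}, \Omega^{0,1}$, and the orientation-dependent sign of $*_M$ on forms pulled back from $\Sigma$); there is no conceptual obstacle.
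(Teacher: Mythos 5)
Your treatment of the first equation of \eqref{Eq_EBE} is correct and is essentially the paper's computation, but your handling of the second and third equations contains a genuine error in the logical bookkeeping, and as written the ``only if'' direction for $\delbar_A\varphi = 0$ is not established. The pure-$\Sigma$ component of $\rd_A\phi - *[\xi,\phi] = 0$ is $\delbar_A\varphi - \del_A\varphi^* = 0$: both terms are $(1,1)$-forms, so this single equation says only that $\delbar_A\varphi = \del_A\varphi^* = (\delbar_A\varphi)^*$, i.e.\ that $\delbar_A\varphi$ is Hermitian --- strictly weaker than $\delbar_A\varphi = 0$. Your parenthetical ``the conjugate $\del_A\varphi^* = 0$ being automatic'' is circular: it follows by conjugation only once $\delbar_A\varphi = 0$ is already known, which is exactly what is to be proved.

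The missing ingredient is the third equation $\rd_A^*\phi = 0$, which you have misallocated. Because of \eqref{Eq_IDelYPhi=0}, $\phi$ has no $\rd y$-component, so $\rd_A^*\phi$ involves no $y$-derivative at all (contrary to your description of it as ``combining $y$- and $\Sigma$-direction pieces''); by the K\"ahler identities on $\Sigma$ it equals $i\Lambda(\delbar_A\varphi + \del_A\varphi^*)$. It is precisely this equation, combined with the pure-$\Sigma$ part $\delbar_A\varphi - \del_A\varphi^* = 0$, that kills the Hermitian part and yields $\delbar_A\varphi = 0$ (and hence $\del_A\varphi^* = 0$). Conversely, $\fd_y\varphi = 0$ comes from the $\rd y\wedge(1,0)$-component of the second equation alone, namely $\nabla_{A,\del_y}\varphi - i[\xi,\varphi] = 0$; no input from $\rd_A^*\phi = 0$ is needed or available there, since $\rd_A^*\phi$ has no piece pairing with $\rd y$. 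This is exactly how the paper's proof proceeds: it records the K\"ahler identity $\rd_A^*\phi = i\Lambda(\delbar_A\varphi + \del_A\varphi^*)$ first, splits the second equation of \eqref{Eq_EBE} into its $\Sigma$- and $\rd y$-components, and only the combination of the two equations reproduces the first and third equations of \eqref{Eq_EBE_Holomorphic}. With that reallocation your argument goes through; without it, the equivalence is not proved.
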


\begin{proof}
  By the Kähler identities,
  \begin{equation*}
    \rd_A^*\phi
    = i\Lambda(\delbar_A\varphi + \del_A\varphi^*).
  \end{equation*}
  Since $*_\Sigma = -I$,
  $*_\Sigma\varphi = i\varphi$ and thus
  \begin{equation*}
    *\varphi = i\rd y\wedge \varphi.
  \end{equation*}
  Therefore, the second equation of \autoref{Eq_EBE} is equivalent to
  \begin{align*}
    \delbar_A\varphi - \del_A\varphi^* &= 0, \\
    \nabla_{A,\del_y}\varphi - i[\xi,\varphi] &= 0, \qand\\
    \nabla_{A,\del_y}\varphi^* + i[\xi,\varphi^*] &= 0.
  \end{align*}
  This shows that the last two equations of \autoref{Eq_EBE} are equivalent to the first and the last equations of \autoref{Eq_EBE_Holomorphic}.

  We have
  \begin{align*}
    F_A
    &=
      \delbar_A\del_A + \del_A\delbar_A + \rd y \wedge \([\nabla_{A,\del_y},\delbar_A] + [\nabla_{A,\del_y},\del_A]\), \\    
    \frac12[\phi\wedge\phi]
    &=
      -[\varphi\wedge \varphi^*], \qand \\
    *\rd_A\xi
    &=
      \nabla_{A,\del_y}\xi \cdot \vol_\Sigma + i\rd y\wedge \del_A\xi - i \rd y \wedge \delbar_A\xi.
  \end{align*}
  Therefore,
  the first equation of \autoref{Eq_EBE} is equivalent to
  \begin{align*}
    \delbar_A\del_A + \del_A\delbar_A + [\varphi\wedge\varphi^*] - \nabla_{A,\del_y}\xi \cdot\vol_\Sigma &= 0, \\
    [\nabla_{A,\del_y},\del_A] - i\del_A\xi &= 0, \qand \\
    [\nabla_{A,\del_y},\delbar_A] + i\delbar_A\xi &= 0.
  \end{align*}
  These are precisely the second equation in \autoref{Eq_EBE_Holomorphic} as well as \autoref{Eq_EBE_Metric}.
\end{proof}


\section{The scattering map}
\label{Sec_ScatteringMap}

\begin{definition}
  In the situation of \autoref{Ex_DiracMonopole},
  set
  \begin{equation*}
    \delbar_\bk \coloneq \delbar_{A_\bk} \qandq
    \fd_{y,\bk} \coloneq \nabla_{A_\bk,\del_y} - i\xi_\bk.
    \qedhere
  \end{equation*}
\end{definition}

\begin{definition}
  \label{Def_ParametrizedHeckeModification}
  A \defined{parametrized Hecke modification} on $(E,H;\Psi)$ is a triple $(\delbar,\varphi,\fd_y)$ consisting of:
  \begin{enumerate}
  \item
    a complex linear map $\delbar \co \Gamma(E) \to \Gamma(\Hom(\pi_\Sigma^*T\Sigma^{0,1},E))$,
  \item
    a section $\varphi \in \Gamma(\pi_\Sigma^*T^*\Sigma^{1,0}\otimes \End(E))$, and
  \item
    a complex linear map $\fd_y \co \Gamma(E) \to \Gamma(E)$
  \end{enumerate}
  such that the following hold:
  \begin{enumerate}[resume]
  \item
    For every $s \in \Gamma(E)$ and $f \in C^\infty(M,\C)$
    \begin{equation*}
      \delbar (fs) = (\delbar_\Sigma f)\otimes s + f\delbar s \qandq
      \fd_y (fs) = (\del_y f) s + f \fd_y s.
    \end{equation*}
  \item    
    There exists an $\alpha > 0$ such that for every $k \in \N_0$
    \begin{equation}
      \label{Eq_ParametrizedHeckeModification_Estimates}
      \begin{gathered}
        \nabla_{A_\bk}^k(\Psi_*\delbar - \delbar_\bk) = O(r^{-k-1+\alpha}), \quad
        \nabla_{A_\bk}^k\Psi_*\varphi = O(r^{-k}), \qand \\
        \nabla_{A_\bk}^k(\Psi_*\fd_y - \fd_y^\bk) = O(r^{-k-1+\alpha}).
      \end{gathered}
    \end{equation}
  \item
    We have
    \begin{equation}
      \label{Eq_ParametrizedHeckeModification}
      \delbar \varphi = 0, \quad
      [\fd_y,\delbar] = 0, \qandq
      [\fd_y,\varphi] = 0.
      \qedhere
    \end{equation}      
  \end{enumerate}
\end{definition}

The following observation is fundamental to this article.

\begin{prop}[{\citet[Section 9.1]{Kapustin2007}}]
  \label{Prop_ScatteringMap}
  Let $(\delbar,\varphi,\fd_y)$ be a parametrized Hecke modification.
  Denote by $(\sE_0,\varphi_0)$ and $(\sE_1,\varphi_1)$ the Higgs bundles induced by restriction to $\set{0}\times\Sigma$ and $\set{1}\times\Sigma$ respectively.
  The parallel transport associated with the operator $\fd_y$ induces a Hecke modification
  \begin{equation*}
    \sigma\co (\sE_0,\varphi_0)|_{\Sigma\setminus\set{z_0}} \to (\sE_1,\varphi_1)|_{\Sigma\setminus\set{z_0}}
  \end{equation*}
  at $z_0$ of type $\bk$.
\end{prop}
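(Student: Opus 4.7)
The plan is to define $\sigma$ away from $z_0$ by ordinary parallel transport, show that it is a morphism of Higgs bundles using the commutation identities \autoref{Eq_ParametrizedHeckeModification}, and then analyze the behavior at $z_0$ by reducing to the Dirac monopole model via the framing $\Psi$.

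For every $z \in \Sigma\setminus\set{z_0}$, the segment $\gamma_z(y) \coloneq (y,z)$ lies entirely in $M$, and the restriction of $\fd_y$ to $\gamma_z^*E$ is a linear first-order ordinary differential operator. Parallel transport from $y=0$ to $y=1$ defines complex-linear isomorphisms $\sigma_z\co (\sE_0)_z \to (\sE_1)_z$ which depend smoothly on $z$ and assemble into a smooth bundle isomorphism $\sigma$ on $\Sigma\setminus\set{z_0}$. The identities $[\fd_y,\delbar]=0$ and $[\fd_y,\varphi]=0$ translate directly into the assertions that parallel transport intertwines, respectively, the $\delbar$-operators and the Higgs fields induced on the slices $\set{0}\times\Sigma$ and $\set{1}\times\Sigma$, so $\sigma$ is an isomorphism of Higgs bundles on $\Sigma\setminus\set{z_0}$.

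The remaining task, and the main content of the proposition, is to show that $\sigma$ extends to a Hecke modification of type $\bk$ at $z_0$. Choose $\rho > 0$ so small that the framing $\Psi$ identifies $\fd_y$ on $B_\rho(p)$, where $p \coloneq (y_0,z_0)$, with the model $\fd_{y,\bk}$ up to a perturbation of size $O(r^{-1+\alpha})$. For $z$ sufficiently close to $z_0$, factor the parallel transport along $\gamma_z$ into three portions: the transport from $y=0$ up to the entry of $\gamma_z$ into $B_\rho(p)$, the transport through $B_\rho(p)$, and the transport from the exit out of $B_\rho(p)$ to $y=1$. The first and third portions are smooth and depend holomorphically on $z$, hence extend across $z_0$ to holomorphic, invertible bundle maps that can be absorbed into holomorphic changes of trivialization of $\sE_0$ and $\sE_1$ near $z_0$. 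The Hecke type of $\sigma$ is therefore determined by the middle portion.

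For the unperturbed model, the middle portion decomposes as a direct sum of line bundle parallel transports that can be computed explicitly: integrating $k_i/(2r)$ along $\gamma_z$ produces a factor of $\abs{z-z_0}^{k_i}$ up to a bounded constant, while the $U(1)$ connection of the Dirac monopole contributes the phase $((z-z_0)/\abs{z-z_0})^{k_i}$ in the standard holomorphic trivialization of $\sO_{\CP^1}(k_i)$ near a pole; together these yield $\diag((z-z_0)^{k_1},\ldots,(z-z_0)^{k_r})$, as observed in \cite[Section 9.1]{Kapustin2007}. The principal obstacle is then controlling the perturbation. I expect Duhamel's principle to show that the perturbed scattering differs from the model by a multiplicative correction of the form $\id + O(\abs{z-z_0}^\alpha)$, since the perturbation has integral bounded by $O(\abs{z-z_0}^\alpha)$ uniformly in $z$. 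Combined with the holomorphicity of $\sigma$ on $\Sigma\setminus\set{z_0}$ and a standard removable-singularity argument, this correction is absorbed into further holomorphic changes of trivialization, producing the required diagonal form.
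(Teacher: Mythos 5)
Your reduction to a neighborhood of $(y_0,z_0)$, the use of $[\fd_y,\delbar]=0$ and $[\fd_y,\varphi]=0$ away from $z_0$, and the removable-singularity endgame are all sound: if in holomorphic trivializations near $z_0$ you knew that $\sigma(z)\diag((z-z_0)^{-k_1},\ldots,(z-z_0)^{-k_r})$ and its inverse were bounded, holomorphicity on the punctured disc would let you extend both across $z_0$ and conclude that the type is $\bk$ (and your model computation is essentially \autoref{Prop_DiracMonopoleScatteringMap}). The genuine gap is the step that is supposed to supply this bound, the Duhamel comparison. First, the quantitative claim is false: the perturbation $\fd_y-\fd_{y,\bk}=O(r^{-1+\alpha})$ has integral along the vertical segment through the ball of size comparable to $\epsilon^\alpha$, uniformly in $z$ but \emph{not} tending to zero as $z\to z_0$ (the integral $\int((y-y_0)^2+\abs{z-z_0}^2)^{(-1+\alpha)/2}\,\rd y$ is dominated by the region $\abs{y-y_0}\gtrsim\abs{z-z_0}$), so it is not $O(\abs{z-z_0}^\alpha)$. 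Second, and more seriously, writing the perturbed transport as $(\id+E(z))$ times the model transport forces $E(z)$ to be built from the perturbation conjugated by partial model transports. The model transport through the ball is highly anisotropic: its singular values differ by factors comparable to $\abs{z-z_0}^{k_i-k_j}$, so this conjugation amplifies off-diagonal components of the perturbation by factors up to $\abs{z-z_0}^{-(k_r-k_1)}$. Hence Duhamel does not give $E=\id+O(\abs{z-z_0}^\alpha)$, and it does not even give boundedness of $E$ as $z\to z_0$ unless all $k_i$ coincide. Boundedness of the comparison map near the singularity is precisely the crux of the proposition, so this step cannot be waved through.

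The paper, following \cite{Charbonneau2011}, avoids comparing transports along vertical lines that pass near the singularity. It constructs an intertwiner $\tau$ with $\fd_y\tau=\tau\fd_{y,\bk}$ by first solving this ODE along the segment $\set{z=z_0}$, $y<y_0$, normalized so that $\tau=\id+O(r^\alpha)$ at the singular point; then extending $\tau$ over the slice $y=y_0-\epsilon$ so that $\delbar\tau=\tau\delbar_\bk$; and finally propagating in $y$ by the intertwining equation on the complement of the ray above the singularity. The commutation relations guarantee that the $\delbar$--intertwining persists, $\tau$ is bounded near the singular point and therefore extends over the top slice, and for $\epsilon$ small it is invertible. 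The actual scattering map is then $\tau(\epsilon,\cdot)$ composed with the model scattering map composed with $\tau(-\epsilon,\cdot)^{-1}$, with the outer factors holomorphic and invertible, so the type-$\bk$ statement reduces to the explicit Dirac-monopole computation. To salvage your route you would need a two-sided factorization with controlled growth in place of the one-sided Duhamel correction, which in effect amounts to constructing such an intertwiner anyway.
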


\begin{definition}
  We call $\sigma$ the \defined{scattering map} associated with $(\delbar,\varphi,\fd_y)$.
\end{definition}

For the reader's convenience we recall the proof of \autoref{Prop_ScatteringMap} following \cite{Charbonneau2011}.

\begin{prop}[{\citet[Section 2.2]{Charbonneau2011}}]
  \label{Prop_DiracMonopoleScatteringMap}
  The scattering map for the Dirac monopole of type $\bk$ is given by $\diag(z^{k_1},\ldots,z^{k_r})$ in suitable holomorphic trivializations.
\end{prop}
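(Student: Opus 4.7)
The plan is to reduce to the rank-$1$ case and then write down the parallel transport explicitly. Since the Dirac monopole of type $\bk$ is the direct sum of charge-$k_i$ Dirac monopoles on the line bundles $\pi^*\sO_{\CP^1}(k_i)$ and $\delbar_\bk$, $\fd_{y,\bk}$, and the zero Higgs field all respect this splitting, the scattering map decomposes block-diagonally; it therefore suffices to exhibit, for a single line bundle of charge $k \in \Z$, holomorphic trivialisations in which the scalar scattering map is $z \mapsto z^k$.

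Set $p = (y_0, 0)$ and introduce the stereographic coordinate on each sphere centred at $p$,
\begin{equation*}
  w = \frac{z}{r + y - y_0},
\end{equation*}
defined away from the lower half-axis $\{z = 0,\ y \leq y_0\}$, with opposite-chart coordinate $w' = 1/w$ defined away from the upper half-axis. Let $e_N$, $e_S$ denote the standard holomorphic frames of $\sO_{\CP^1}(k)$ on the two charts, with transition $e_S = w^k e_N$, and normalise the Hermitian metric so that $|e_N|^2 = (1 + |w|^2)^{-k}$, making $B_k$ the corresponding Chern connection. A direct computation using $\del_y w = -w/r$, $\del_{\bar z} w = -w^2/(2r)$, and $-i\xi_k = k/(2r)$ shows that in the frame $\pi^* e_S$ on $\{y < y_0\}$ one has $\fd_y = \del_y + k(y_0 - y)/(2r^2)$, whereas in the frame $\pi^* e_N$ on $\{y > y_0\}$ one has $\fd_y = \del_y + k(2r - y + y_0)/(2r^2)$. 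Solving $\fd_y s = 0$ and then imposing $\delbar_A s = 0$---which, by $[\fd_y, \delbar_A] = 0$, need only be verified on a single slice---yields the $\fd_y$-parallel holomorphic sections
\begin{equation*}
  s^- = C_-(z)\, r^{k/2}\, \pi^* e_S \quad (y < y_0), \qquad
  s^+ = C_+(z)\, \frac{r^{k/2}}{(r + y - y_0)^k}\, \pi^* e_N \quad (y > y_0),
\end{equation*}
with $C_\pm$ arbitrary holomorphic functions of $z$.

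Restricting to the boundary slices, the nonvanishing holomorphic sections $\tilde\sigma_0 := r^{k/2}\, \pi^* e_S$ on $\sE_0$ and $\tilde\sigma_1 := r^{k/2}/(r + 1 - y_0)^k\, \pi^* e_N$ on $\sE_1$ furnish the desired local trivialisations near $z_0 = 0$ (the relevant half-axis is regular for the chart in each case). For $z \neq 0$ parallel-transport $\tilde\sigma_0$ along the line of constant $z$ through $y_0$: on the lower region it equals $r^{k/2}\, \pi^* e_S$, and using $\pi^* e_S = w^k\, \pi^* e_N = z^k (r + y - y_0)^{-k}\, \pi^* e_N$ the same section on the upper region equals $z^k \cdot r^{k/2}/(r + y - y_0)^k\, \pi^* e_N$. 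Evaluating at $y = 1$ yields $z^k\, \tilde\sigma_1$, so the scattering map is multiplication by $z^k$ in the trivialisations $(\tilde\sigma_0, \tilde\sigma_1)$; summing over $i$ produces the claimed $\diag(z^{k_1}, \ldots, z^{k_r})$.

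The chief technical obstacle is organising the parallel-transport and holomorphicity computations in a tractable form, but the commutation $[\fd_y, \delbar_A] = 0$ collapses the PDE content to a one-slice check, after which the matching of $s^-$ and $s^+$ across $y = y_0$ (for $z \neq 0$) and the identification of the scattering map follow directly from the explicit formulas above.
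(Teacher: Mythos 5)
Your proposal is correct and takes essentially the same route as the paper: reduce to the rank-one case and exhibit holomorphic trivializations of the pulled-back line bundle in which the $\fd_y$--parallel transport is multiplication by $z^k$, the only difference being that you work directly in the holomorphic charts of $\sO_{\CP^1}(k)$ and solve $\fd_y s = 0$ explicitly, while the paper starts from unitary trivializations with transition $(z/\abs{z})^k$ and corrects them by the complex gauge factors $u_\pm = (r\pm y)^{\pm k/2}$. Your coefficient formulas for $\fd_y$ in the frames $\pi^*e_S$, $\pi^*e_N$ and the holomorphicity of $\tilde\sigma_0$, $\tilde\sigma_1$ check out, and your normalizing factors $r^{k/2}$ and $r^{k/2}(r+y-y_0)^{-k}$ play exactly the role of the paper's $u_\pm$.
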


\begin{proof}
  It suffices to consider the case $r=1$.
  Set
  \begin{equation*}
    U_\pm \coloneq \set{ (y,z) \in \R\times\C: z = 0 \implies \pm y > 0 }.
  \end{equation*}
  There are trivializations $\tau_\pm\co \pi^*\sO_{\CP^1}(k)|_{U_\pm} \iso U_\pm\times \C$ such that the following hold:
  \begin{enumerate}
  \item
    The transition function $\tau \co U_+\cap U_- \to \U(1)$ defined by
    \begin{equation*}
      \tau_+\circ \tau_-^{-1}(y,z;\lambda) \eqcolon (y,z,\tau(y,z)\lambda)
    \end{equation*}
    is given by
    \begin{equation*}
      (y,z) \mapsto (z/\abs{z})^k.
    \end{equation*}
  \item
    The connection $A$ defined in \autoref{Ex_DiracMonopole} satisfies
    \begin{equation*}
      \nabla_{A_\pm}
      \coloneq
        (\tau_\pm)_*\nabla_A
      =
       \rd + \frac{k}{4}(\mp 1 + y/r) \frac{\bar z\rd z - z\rd\bar z}{\abs{z}^2}
     \end{equation*}
     for
     \begin{equation*}
       r \coloneq \sqrt{y^2+\abs{z}^2}.
     \end{equation*}
  \end{enumerate}

  The trivializations $\tau_\pm$ are not holomorphic.
  This can be rectified as follows.
  Since
  \begin{equation*}
    \rd r = \frac{1}{2r}(\bar z \rd z + z\rd\bar z + 2y\rd y),
  \end{equation*}
  the gauge transformations
  \begin{equation*}
    u_\pm(y,z) \coloneq (r\pm y)^{\pm k/2}
  \end{equation*}
  satisfy
  \begin{align*}
    -(\rd u_\pm)u_\pm^{-1}
    &=
      \mp\frac{k}{2(r \pm y)}(\rd r \pm \rd y) \\
    &=        
      \mp\frac{k}{4r(r\pm y)}(\bar z \rd z + z \rd \bar z + 2(y \pm r)\rd y) \\
    &=
      \frac{k}{4}(\mp 1 + y/r)\frac{\bar z \rd z + z \rd \bar z}{\abs{z}^2} - \frac{k}{2r}\rd y.
  \end{align*}
  Therefore,
  \begin{align*}
    \nabla_{\tilde A_\pm}
    &\coloneq 
      (u_\pm)_*\nabla_{A_\pm} \\
    &=
      \nabla_{A_\pm} - (\rd u_\pm)u_\pm^{-1} \\
    &=
      \rd + \frac{k}{2} (\mp 1 + y/r) \frac{\bar z\rd z}{\abs{z}^2} - \frac{k}{2r}\rd y.        
  \end{align*}
  It follows that
  \begin{equation*}
    \delbar_{\tilde A_\pm} = \delbar \qandq
    \nabla_{\tilde A_\pm,\del_y} + \frac{k}{2r} = \del_y.
  \end{equation*}
  Hence, the trivializations $u_\pm\circ\tau_\pm$ are holomorphic and with respect to these the parallel transport associated with $\nabla_{A,\del_y} + \frac{ik}{2r}$ from $y=-\epsilon$ to $y=\epsilon$ is given by
  \begin{equation*}
    u_+(\epsilon,z) \cdot \tau(\epsilon,z) \cdot u_-^{-1}(-\epsilon,z) 
    =
      (r+\epsilon)^{k/2}\(\frac{z}{\abs{z}}\)^k(r-\epsilon)^{k/2}
    =
      z^k.
    \qedhere
  \end{equation*}
\end{proof}

\begin{proof}[Proof of \autoref{Prop_ScatteringMap}]
  The fact that $\sigma$ is holomorphic and preserves the Higgs fields follows directly from \autoref{Eq_ParametrizedHeckeModification}.

  To prove that $\sigma$ is given by $\diag(z^{k_1},\ldots,z^{k_r})$ in suitable trivializations we follow \citet[Proposition 2.5]{Charbonneau2011}.
  It suffices to consider a neighborhood of $(y_0,z_0)$ which we identify with a neighborhood of the origin in $\R\times\C$.
  Since $\fd_y = \fd_{y,\bk} + O(r^{-1+\alpha})$,
  we can construct a section $\tau$ of $\End(E_\bk)$ over $[-\epsilon,0)\times\set{0}$ satisfying  
  \begin{equation}
    \label{Eq_TauIntertwinesDY}
    \fd_y\tau = \tau\fd_{y,\bk} \qandq
    \tau(\cdot,0) = \id_{\C^r} + O(r^\alpha).
  \end{equation}
  First extend $\tau(-\epsilon,0)$ to a section of $\End(E_\bk)$ over $\set{-\epsilon}\times B_\epsilon(0)$ satisfying
  \begin{equation}
    \label{Eq_TauIntertwinesDelbar}
    \delbar\tau = \tau\delbar_\bk
  \end{equation}
  and then further extend it to $[-\epsilon,\epsilon]\times B_\epsilon(0) \setminus [0,\epsilon] \times \set{0}$ by imposing the first part of \autoref{Eq_TauIntertwinesDY}.
  The equation \autoref{Eq_TauIntertwinesDelbar} continues to hold.
  Since $\tau$ is bounded around $(0,0)$,
  it extends to $[-\epsilon,\epsilon]\times B_\epsilon(0)$.
  If $0 < \epsilon \ll 1$, then $\tau$ is invertible.

  By construction, if $\sigma$ denotes the parallel transport associated with $\fd_{y,\bk}$ from $y=-\epsilon$ to $y=\epsilon$,
  then the corresponding parallel transport associated with $\fd_y$ is given by
  \begin{equation*}
    \tau(\epsilon,\cdot) \sigma \tau(-\epsilon,\cdot)^{-1}.
    \qedhere
  \end{equation*}
  In light of \autoref{Prop_DiracMonopoleScatteringMap},
  this proves the assertion.
\end{proof}

The preceding discussion constructs a map
\begin{equation*}
  \sC^\EBE(A_0,\phi_0;y_0,z_0,\bk) \to \sM^\Hecke(\sE_0,\varphi_0;z_0,\bk).
\end{equation*}
This map is $\sG$--invariant.
The following is the main result of this article.

\begin{theorem}
  \label{Thm_EBE=Hecke}
  The map
  \begin{equation*}
    \sM^\EBE(A_0,\phi_0;y_0,z_0,\bk) \to \sM^\Hecke(\sE_0,\varphi_0;z_0,\bk)
  \end{equation*}
  induced by the scattering map construction is bijective.
\end{theorem}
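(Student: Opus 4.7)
The plan is to adapt the Donaldson--Simpson proof of the Kobayashi--Hitchin correspondence. Recall from \autoref{Prop_SplitEBEIntoHolomorphic+Metric} that in temporal gauge the extended Bogomolny equation decouples into the integrability conditions of a parametrized Hecke modification $(\delbar_A, \varphi, \fd_y)$ in the sense of \autoref{Def_ParametrizedHeckeModification}, together with the single metric equation \eqref{Eq_EBE_Metric}. Accordingly, I would prove the theorem in three stages: (i) lift every class in $\sM^\Hecke(\sE_0, \varphi_0; z_0, \bk)$ to a parametrized Hecke modification realizing it as its scattering datum; (ii) solve \eqref{Eq_EBE_Metric} within the complex gauge orbit of such a parametrized Hecke modification; (iii) verify the Dirac type singularity at $(y_0, z_0)$ and the uniqueness of the metric modulo $\sG$.

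For (i), given $(\sF, \chi; \eta)$, I would glue the pullback of $(\sE_0, \varphi_0)$ to $[0, y_0] \times \Sigma$ with the pullback of $(\sF, \chi)$ to $[y_0, 1] \times \Sigma$ along $\{y_0\} \times (\Sigma \setminus \{z_0\})$ via $\eta$, taking $\fd_y = \del_y$ in each fibered trivialization. In a holomorphic chart near $z_0$ in which $\eta = \diag(z^{k_1}, \ldots, z^{k_r})$, I would replace the construction in a small neighborhood of $(y_0, z_0)$ by the model $(\delbar_\bk, 0, \fd_{y,\bk})$, interpolating across an annular shell with a cutoff function (and solving a small $\delbar$--problem if needed to restore \eqref{Eq_ParametrizedHeckeModification}). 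By \autoref{Prop_DiracMonopoleScatteringMap}, the scattering map of the resulting parametrized Hecke modification is isomorphic to $(\sF, \chi; \eta)$.

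Step (ii) follows Simpson's playbook. Fixing the parametrized Hecke modification, \eqref{Eq_EBE_Metric} becomes a quasilinear elliptic PDE for a positive $H$--self-adjoint endomorphism $h$ of $E$, carrying Dirichlet data at $y = 0$ prescribed by $(A_0, \phi_0, H_0)$, Neumann data at $y = 1$ encoding $\xi|_{y=1}=0$, and model asymptotics near $(y_0, z_0)$ fixed by the framing $\Psi$. Existence can be obtained either by a continuity method in which the Dirac singularity is switched on gradually, or by Donaldson's heat flow
\[
  h^{-1} \del_t h = -2i\Lambda(F_{A_h} + [\varphi \wedge \varphi^{*_h}]) + 2i \nabla_{A_h, \del_y} \xi_h,
\]
whose $C^0$ bounds and long-time convergence follow from the subharmonicity of the Donaldson distance measured against the model asymptotics, in the spirit of \cite[Section 5]{Simpson1988}.

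The principal obstacle is (iii): the singularity analysis at $(y_0, z_0)$. Following the ideas of \citet{Jacob2016a, Chen2017}, I would rescale the solution about the singular point, extract a tangent cone on $\R^3 \setminus \{0\}$, and identify this limit with the Dirac monopole $(A_\bk, \xi_\bk)$ of \autoref{Ex_DiracMonopole}; the identification rests on the prescribed complex gauge class together with a rigidity statement for EBE solutions on $\R^3 \setminus \{0\}$ matching the model at infinity. A polynomial-improvement iteration of Simpson--Mochizuki type then upgrades the tangent cone convergence to the weighted $\nabla^k$ estimates of \autoref{Def_Framing} with some rate $\alpha > 0$. Finally, uniqueness of the metric modulo $\sG$ follows from the standard maximum-principle argument applied to $\log\tr(h_1 h_2^{-1}) + \log\tr(h_2 h_1^{-1})$, which is subharmonic on $M$, vanishes on $\{0\}\times\Sigma$, has vanishing normal derivative on $\{1\}\times\Sigma$, and is controlled near $(y_0, z_0)$ by the common Dirac asymptotics; this yields injectivity of the scattering map.
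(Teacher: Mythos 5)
Your overall architecture is the same as the paper's (reduce via \autoref{Prop_SplitEBEIntoHolomorphic+Metric} to holomorphic data plus the metric equation \eqref{Eq_EBE_Metric}, parametrize Hecke modifications by gluing, solve the metric equation by Simpson's method with Dirichlet data at $y=0$ and Neumann data at $y=1$, and prove uniqueness by a maximum principle), but your step (iii) hides a genuine gap: it is exactly the point where the paper has to do real work, and you dispose of it by asserting a package --- compactness of rescalings, a rigidity theorem for solutions of the extended Bogomolny equation on $\R^3\setminus\set{0}$ asymptotic to the Dirac monopole, and a ``polynomial-improvement iteration'' giving the weighted $\nabla^k$ estimates --- none of which is available off the shelf. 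The paper explicitly cannot use the existing Dirac-singularity results for the Bogomolny equation \cite{Kronheimer1985,Pauly1998,Mochizuki2017}, because here the Higgs field is present, and proving the rigidity and rate statements you invoke is essentially of the same order of difficulty as the theorem itself. Moreover, the statement that must be proved is not about the solution in isolation but about the bounded endomorphism $s$ with $K=He^{s}$ produced by the flow: one needs $\nabla_{A_\bk}^k(s-s_0)=O(r^{-k+\alpha})$ with $s_0$ parallel and commuting with $\xi_\bk$, i.e.\ that $K$ lies in the $\sG^\C$--orbit of $H$ (\autoref{Prop_SingularityAnalysis}); knowing a tangent cone of the rescaled solutions does not by itself give this relative decay at a definite rate with respect to the fixed framing. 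The paper's substitute for your package is a direct a priori Morrey estimate $\int_{B_r}\abs{\fV s}^2\lesssim r^{1+2\alpha}$ (\autoref{Prop_AprioriMorreyEstimate}), obtained from a Neumann--Poincar\'e inequality on annuli (\autoref{Prop_NeumannPoincare}) together with a differential inequality coming from the identity in \autoref{Prop_(KHes-KH,s)}; some argument of this kind is what your outline would have to supply.

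A second, smaller omission concerns injectivity. Uniqueness of the metric modulo $\sG$ only shows that each $\sG^\C$--orbit of parametrized Hecke modifications contains at most one solution of \eqref{Eq_EBE_Metric} up to $\sG$; to conclude injectivity of the scattering map you also need that two parametrized Hecke modifications whose scattering data are isomorphic Hecke modifications lie in the same $\sG^\C$--orbit. The paper proves this separately in the second half of \autoref{Prop_ParametrizingHeckeModifications}, by putting both triples in temporal gauge and gluing the Higgs-bundle isomorphism on $y>y_0$ with the identity on $y<y_0$ to produce the required complex gauge transformation; this step is routine but necessary and is missing from your plan. Finally, a minor point on your step (i): the cutoff-and-correction near $(y_0,z_0)$ is unnecessary --- working in trivializations in which $\eta=\diag(z^{k_1},\dots,z^{k_r})$ and comparing with \autoref{Prop_DiracMonopoleScatteringMap} already yields the required asymptotics --- and a naive cutoff would destroy the integrability conditions \eqref{Eq_ParametrizedHeckeModification}, so repairing it would be more than a ``small $\delbar$--problem''; the paper's direct gluing avoids this altogether.
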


The proof of this theorem occupies the remainder of this article.


\section{Parametrizing Hecke modifications}
\label{Sec_ParametrizingHeckeModifications}

\begin{definition}
  Denote by $(\sE_0,\varphi_0)$ the Higgs bundle induced by $(A_0,\phi_0)$.
  Denote by
  \begin{equation*}
    \sC^\pHecke(\sE_0,\varphi_0;y_0,z_0,\bk)
  \end{equation*}
  the set of parametrized Hecke modifications agreeing with $(\sE_0,\varphi_0)$ at $y=0$.
  Denote by
  \begin{equation*}
    \sG^\C \subset \sG^\C(E)
  \end{equation*}
  the group of singularity preserving complex gauge transformations of $E$ which are the identity at $y=0$.
  Here singularity preserving means the analogue of the condition in \autoref{Def_SingularityPreserving} holds.

  Set
  \begin{equation*}
    \sM^\pHecke(\sE_0,\varphi_0;y_0,z_0,\bk)
    \coloneq
    \sC^\pHecke(\sE_0,\varphi_0;y_0,z_0,\bk)/\sG^\C.
    \qedhere
  \end{equation*}
\end{definition}

The first step in the proof of \autoref{Thm_EBE=Hecke} is to show that every Hecke modification of $(\sE_0,\varphi_0)$ arises as the scattering map of a parametrized Hecke modification.

\begin{prop}
  \label{Prop_ParametrizingHeckeModifications}
  The map
  \begin{equation}
    \label{Eq_ParametrizingHeckeModifications}
    \sM^\pHecke(\sE_0,\varphi_0;y_0,z_0,\bk) \to \sM^\Hecke(\sE_0,\varphi_0;z_0,\bk)
  \end{equation}
  induced by the scattering map construction is a bijection.
\end{prop}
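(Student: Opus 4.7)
The plan is to construct an explicit inverse to the scattering map at the level of parametrized Hecke modifications, and to show that it descends to a well-defined map on the quotients. The whole argument rests on two observations: parallel transport along $\fd_y$ is canonical (it composes trivially because the path in $M$ from $y=0$ to $y=y'$ at fixed $z\ne z_0$ is essentially unique up to reparametrization), and near $(y_0,z_0)$ the Dirac monopole already realizes the model Hecke modification $\diag(z^{k_1},\dots,z^{k_r})$ by \autoref{Prop_DiracMonopoleScatteringMap}.

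\textbf{Surjectivity.} Given a Hecke modification $(\sF,\chi;\eta)$, I would build $(\delbar,\varphi,\fd_y)$ on the bundle $(E,H;\Psi)$ of \autoref{Prop_ConstructE} piecewise. On a small neighborhood $U$ of $(y_0,z_0)$, take $\Psi_*\delbar=\delbar_\bk$ and $\Psi_*\fd_y=\fd_{y,\bk}$; choose $\varphi$ near the singularity by extending any local holomorphic representative of $\varphi_0$ in a trivialization in which $\eta=\diag(z^{k_1},\dots,z^{k_r})$, via $\fd_{y,\bk}$-parallel transport, so that $[\fd_{y,\bk},\varphi]=0$ and $\delbar_\bk\varphi=0$ hold automatically. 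On $M\setminus U$ (split by $y\lessgtr y_0$), pull back $(\sE_0,\varphi_0)$ and $(\sF,\chi)$ along $\pi_\Sigma$ and take $\fd_y=\del_y$ in translation-invariant trivializations. The key to the gluing is the proof of \autoref{Prop_DiracMonopoleScatteringMap}: in the gauge $u_\pm\tau_\pm$, the Dirac monopole structure on $E_\bk|_{y\lessgtr 0}$ is identified with the trivial holomorphic bundle, with transition $\diag(z^{k_1},\dots,z^{k_r})$ across $y=0$. This is precisely the local model of $\eta$, so the inner and outer structures match along $\del U$ up to a choice of local holomorphic trivializations of $\sE_0$ and $\sF$ at $z_0$. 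The three commutation relations in \autoref{Eq_ParametrizedHeckeModification} are then immediate on each piece, and the scattering map returns $\eta$ by construction.

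\textbf{Injectivity.} Suppose two parametrized Hecke modifications $(\delbar^{(i)},\varphi^{(i)},\fd_y^{(i)})$, $i=1,2$, induce isomorphic Hecke modifications via $\zeta\co(\sE_1^{(1)},\varphi_1^{(1)})\to(\sE_1^{(2)},\varphi_1^{(2)})$. Let $P_{y\leftarrow 0}^{(i)}$ denote the parallel transport along $\fd_y^{(i)}$ in the straight $y$-direction. I would define
\[
  g(y,z) \coloneq P_{y\leftarrow 0}^{(2)}\circ (P_{y\leftarrow 0}^{(1)})^{-1}
\]
wherever the path $[0,y]\times\{z\}$ avoids $(y_0,z_0)$, i.e.\ on $M\setminus (\{z_0\}\times(y_0,1])$. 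The commutation relations $[\fd_y^{(i)},\delbar^{(i)}]=[\fd_y^{(i)},\varphi^{(i)}]=0$ immediately give that $g$ intertwines the two holomorphic structures and Higgs fields. On the remaining ``post-singular ray'' $\{z_0\}\times(y_0,1]$, redefine $g$ by transporting from $y=1$ using $\zeta$:
\[
  g(y,z) \coloneq (P_{1\leftarrow y}^{(2)})^{-1}\circ \zeta\circ P_{1\leftarrow y}^{(1)}.
\]
Because the scattering maps agree through $\zeta$, these two definitions coincide on the overlap $\{y<y_0\}\cup\{z\ne z_0\}$, so $g$ extends to a smooth complex gauge transformation on $M$ with $g|_{y=0}=\id$.

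\textbf{Main obstacle: the singular asymptotics of $g$.} The delicate point is verifying that $g\in\sG^\C$, i.e.\ that $g$ is singularity-preserving near $(y_0,z_0)$. Both $\fd_y^{(i)}$ equal $\fd_{y,\bk}+O(r^{-1+\alpha})$ in the framing $\Psi$, so Gronwall-type estimates on the ODE
\[
  \fd_y^{(2)} g = g\fd_y^{(1)}
\]
and the analogous transverse analysis using the commutation with $\delbar^{(i)}$, $\varphi^{(i)}$ should show that $\Psi_*g$ is asymptotic to an automorphism $u_p$ of $E_\bk$ that is $\nabla_{A_\bk}$-parallel, commutes with $\xi_\bk$, and satisfies $\nabla_{A_\bk}^k(\Psi_*g-u_p)=O(r^{-k+\alpha})$. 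The necessary block-decomposition is dictated by the jumps in the $k_i$: $u_p$ is a constant endomorphism preserving the $\sO(k_i)$-grading. I expect this to follow by expanding $g$ in the common eigenbasis of the Dirac monopole and using that any bounded solution of $\fd_{y,\bk} u = 0$ on a punctured neighborhood is grade-preserving. Once this is established, the two constructions are inverse up to isomorphism, proving bijectivity.
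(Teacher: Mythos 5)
Your overall strategy is sound and ends up close to the paper's, but the execution differs in two places worth comparing. For surjectivity, the paper does not patch in the Dirac monopole by hand: it simply glues the pullback of $(\sE_0,\varphi_0)$ on $[0,y_0]\times\Sigma$ to the pullback of $(\sE_1,\varphi_1)$ on $[y_0,1]\times\Sigma$ via $\eta$, takes $\delbar,\varphi$ slicewise constant and $\fd_y=\del_y$ on each half, and observes that this bundle is exactly the one from \autoref{Prop_ConstructE}, so the required asymptotics relative to $(E_\bk,H_\bk)$ come for free from the model computation behind \autoref{Prop_DiracMonopoleScatteringMap}. Your explicit matching of the inner Dirac-monopole charts with the outer pullbacks in a trivialization where $\eta=\diag(z^{k_1},\ldots,z^{k_r})$ is a correct but more laborious route to the same object (indeed, in the holomorphic gauge of \autoref{Prop_DiracMonopoleScatteringMap} your $\fd_{y,\bk}$-transported $\varphi$ is just $\varphi_0$ for $y<y_0$ and $\chi$ for $y>y_0$, i.e.\ the paper's section). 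For injectivity, the paper puts both triples in temporal gauge, where they become the slicewise glued representatives, and then glues $\zeta$ with the identity; your $g$ built from parallel transports is precisely the gauge transformation effecting this, so the two arguments are the same up to packaging. What the temporal-gauge phrasing buys is that the singular analysis is not redone: it is delegated to the intertwiner $\tau$ constructed in the proof of \autoref{Prop_ScatteringMap}, which satisfies $\tau=\id+O(r^\alpha)$, is extended across the post-singular ray by boundedness plus holomorphicity, and is invertible near the puncture.

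That last point is exactly your flagged ``main obstacle,'' and it is the only place where your write-up is genuinely incomplete rather than just different: you assert, but do not prove, that $\Psi_*g$ is asymptotic to a parallel, $\xi_\bk$-commuting $u_p$ with $\nabla_{A_\bk}^k(\Psi_*g-u_p)=O(r^{-k+\alpha})$ for all $k$, which is what membership in $\sG^\C$ requires. The Gronwall bound along $y$-lines only gives boundedness and $C^0$-closeness away from the ray; to finish one must extend $g$ over $\set{z_0}\times(y_0,1]$ using that it intertwines the two $\delbar$-operators (removable singularity for bounded holomorphic data), identify the limit $u_p$ as block-diagonal with respect to the eigenspaces of $\xi_\bk$, and then bootstrap the derivative decay. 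This is the same analysis the paper carries out (at a comparable level of detail) for $\tau$ in \autoref{Prop_ScatteringMap}, so your proof can be closed either by quoting that construction or by simply reducing to temporal gauge as the paper does; as written, however, the derivative asymptotics remain a sketch.
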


\begin{proof}
  Let $(\sE_1,\varphi_1;\eta)$ be a Hecke modification of $(\sE_0,\varphi_0)$ at $z_0$ of type $\bk$.
  Denote the complex vector bundles underlying $\sE_0$ and $\sE_1$ by $E_0$ and $E_1$.
  Denote the holomorphic structures on $\sE_0$ and $\sE_1$ by $\delbar_0$ and $\delbar_1$.
  The bundle $E$ is isomorphic to the bundle obtained by gluing the pullback of $E_0$ to $[0,y_0]\times \Sigma \setminus \set{(y_0,z_0)}$ and the pullback of $E_1$ to $[y_0,1]\times \Sigma \setminus \set{(y_0,z_0)}$ via $\eta$.
  Therefore,
  there is an operator $\delbar\co \Gamma(E) \to \Gamma(\Hom(\pi_\Sigma^*T\Sigma^{0,1},E))$ on $E$ whose restriction to $\set{y}\times \Sigma$ agrees with $\delbar_0$ if $y<y_0$ and with $\delbar_1$ if $y > y_0$.
  There also is a section $\varphi \in \Gamma(\pi_\Sigma^*T^*\Sigma^{1,0}\otimes \End(E))$ whose restriction to $\set{y}\times \Sigma$ agrees $\varphi_0$ if $y<y_0$ and with $\varphi_1$ if $y > y_0$.
  Define $\fd_y\co \Gamma(E) \to \Gamma(E)$ to be given by $\del_y$ on both halves of the above decomposition of $E$.
  By construction,
  $(\delbar,\varphi,\fd_y)$ is a parametrized Hecke modification and the associated scattering map induces the Hecke modification $(\sE_1,\varphi_1;\eta)$.
  This proves that the map \autoref{Eq_ParametrizingHeckeModifications} is surjective.

  Let $(\delbar,\varphi,\fd_y)$ and $(\tilde\delbar,\tilde\varphi,\tilde\fd_y)$ be two parametrized Hecke modification which induce the Hecke modifications $(\sE_1,\varphi_1;\eta)$ and $(\tilde\sE_1,\tilde\varphi_1;\tilde\eta)$.
  Suppose that the latter are isomorphic via $\zeta\co (\sE_1,\varphi_1) \to (\tilde\sE_1,\tilde\varphi_1)$.
  We can assume that both parametrized Hecke modifications are in temporal gauge.
  Therefore, on $[0,y_0)\times\Sigma$ they agree and are given by $(\delbar_0,\varphi_0,\del_y)$;
  while on $(y_0,1]\times\Sigma$
  \begin{equation*}
    (\delbar,\varphi,\fd_y) = (\delbar_1,\varphi_1,\del_y) \qandq
    (\tilde\delbar,\tilde\varphi,\tilde\fd_y) = (\tilde\delbar_1,\tilde\varphi_1,\del_y).
  \end{equation*}
  The isomorphism $\zeta$ intertwines $\delbar_1$ and $\tilde\delbar_1$ as well as $\varphi_1$ and $\tilde\varphi_1$ and commutes with the identification of $E_0$ and $E_1$ respectively $\tilde E_1$ over $\Sigma\setminus\set{z_0}$.
  Therefore, it glues with the identity on $E_0$ to a gauge transformation in $\sG^\C$ relating $(\delbar,\varphi,\fd_y)$ and $(\tilde\delbar,\tilde\varphi,\tilde\fd_y)$.
  This proves that the map \autoref{Eq_ParametrizingHeckeModifications} is injective.
\end{proof}


\section{Varying the Hermitian metric}
\label{Sec_HermitianMetrics}

The purpose of this section is to reduce \autoref{Thm_EBE=Hecke} to a uniqueness and existence result for a certain partial differential equation imposed on a Hermitian metric.

\begin{prop}
  \label{Prop_ComplexTriple=>GaugeTheoryData}
  Given a parametrized Hecke modification $(\delbar,\varphi,\fd_y)$ on $(E,H)$,
  there are unique $A_H \in \sA(E,H)$, $\phi_H \in \Omega^1(M,\fu(E,H))$, and $\xi_H \in \Omega^0(M,\fu(E,H))$ such that
  \begin{equation}
    \label{Eq_ComplexTriple=>GaugeTheoryData}
    \delbar = \nabla_{A_H}^{0,1}, \quad
    \varphi = \phi_H^{1,0}, \qandq
    \fd_y = \nabla_{A_H,\del_y} - i\xi_H.
  \end{equation}
  Moreover, $(A_H,\phi_H,\xi_H)$ has a Dirac type singularity of type $\bk$ at $(y_0,z_0)$.
\end{prop}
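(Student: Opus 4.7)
The plan is to handle existence and uniqueness together by reading off the three objects $A_H, \phi_H, \xi_H$ from the defining equations \eqref{Eq_ComplexTriple=>GaugeTheoryData} together with the constraints that $A_H$ is unitary with respect to $H$ and that $\phi_H,\xi_H$ are skew-Hermitian, and then to verify the Dirac type singularity by transporting everything under the framing $\Psi$ to the model $(E_\bk,H_\bk)$.

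For the first stage, in the $\Sigma$-direction the requirement $\nabla_{A_H}^{0,1}=\delbar$ together with the unitarity of $A_H$ with respect to $H$ uniquely determines $\nabla_{A_H}^{1,0}$ as the Chern connection complement (this is just the standard Chern correspondence recalled after \autoref{Eq_PhiHarmonic}). For $\phi_H$, combining $\phi_H^{1,0}=\varphi$ with the skew-Hermiticity condition $\phi_H\in\Omega^1(M,\fu(E,H))$ and the implicit constraint $i(\del_y)\phi_H=0$ (which is built into the EBE setup and the fact that $\varphi$ has no $\dy$-component by \autoref{Def_ParametrizedHeckeModification}) forces $\phi_H = \varphi - \varphi^*$. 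In the $y$-direction, pick a local $H$-unitary frame and write $\fd_y = \del_y + a$ for an endomorphism $a$; the decomposition $a = \tfrac{1}{2}(a-a^*) + \tfrac{1}{2}(a+a^*)$ into skew-Hermitian and Hermitian parts is uniquely forced by the requirement that $\nabla_{A_H,\del_y}$ be skew-Hermitian and $-i\xi_H$ Hermitian, and these formulas are frame-independent because a change of unitary frame conjugates $a$ by a unitary matrix. So we set
\begin{equation*}
  \nabla_{A_H,\del_y} \coloneq \del_y + \tfrac{1}{2}(a-a^*) \qandq \xi_H \coloneq \tfrac{i}{2}(a+a^*).
\end{equation*}

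For the second stage, note that the model data $(A_\bk,0,\xi_\bk)$ arises from $(\delbar_\bk,0,\fd_y^\bk)$ by the exact same recipe. Using that $\Psi$ is an isometry between $H$ and $H_\bk$, the Hermitian conjugation commutes with $\Psi_*$. Setting $a^{0,1}\coloneq\Psi_*\delbar-\delbar_\bk$, the Chern formula yields $\Psi_*A_H - A_\bk = a^{0,1} + (a^{0,1})^{*_{H_\bk}}$ in the $\Sigma$-direction, which inherits the bound $\nabla_{A_\bk}^k(\Psi_*A_H - A_\bk) = O(r^{-k-1+\alpha})$ from the corresponding bound on $a^{0,1}$. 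Similarly $\Psi_*\phi_H = \Psi_*\varphi - (\Psi_*\varphi)^{*_{H_\bk}}$ inherits $O(r^{-k})$ bounds, and setting $b\coloneq \Psi_*\fd_y - \fd_y^\bk$, the skew-Hermitian and Hermitian parts of $b$ recover $\Psi_*\nabla_{A_H,\del_y}-\nabla_{A_\bk,\del_y}$ and $-i(\Psi_*\xi_H - \xi_\bk)$ respectively, each of which satisfies the same $O(r^{-k-1+\alpha})$ bound as $b$.

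The only point requiring attention --- and the closest thing to an obstacle --- is verifying that the asymptotic bounds propagate through the covariant derivatives $\nabla_{A_\bk}^k$. This is, however, automatic: the operations used (taking Hermitian adjoint with respect to $H_\bk$ and projecting onto the skew-Hermitian/Hermitian parts) are fiberwise algebraic, and they commute with $\nabla_{A_\bk}$ because the model metric $H_\bk$ is parallel under $A_\bk$. Hence the decay estimates pass through unchanged, completing the verification of the Dirac type singularity.
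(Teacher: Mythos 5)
Your proposal is correct, and it is essentially the argument the paper intends: the paper's proof is a single line (``analogous to the existence and uniqueness of the Chern connection; in fact it can be reduced to it'', pointing to the lift to dimension four), which packages your two constructions---the Chern connection for $\delbar$ with respect to $H$ in the $\Sigma$--direction and the Hermitian/skew-Hermitian splitting of $\fd_y$ in the $y$--direction---into one Chern-connection statement for the operator $\bdelbar = \frac12(\del_\alpha + i\rd y\cdot\fd_y) + \delbar$ on $S^1\times M$, with $S^1$--invariance returning $(A_H,\xi_H)$ on $M$. Your componentwise version makes everything explicit; in particular you correctly observe that uniqueness of $\phi_H$ requires reading $\varphi = \phi_H^{1,0}$ together with the implicit constraint $i(\del_y)\phi_H = 0$, and your check of the Dirac-type decay---the adjoint and the Hermitian/skew projections are fiberwise algebraic and commute with $\nabla_{A_\bk}$ because $H_\bk$ is $A_\bk$--parallel, so the $O(r^{-k-1+\alpha})$ and $O(r^{-k})$ bounds pass through---is precisely the (unwritten) reason the paper can assert the ``moreover'' clause.

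One small inaccuracy in your well-definedness argument: a $y$--dependent unitary change of frame does not merely conjugate $a$; it replaces $a$ by $uau^{-1} - (\del_y u)u^{-1}$. Since the extra term $(\del_y u)u^{-1}$ is skew-Hermitian, your conclusion stands---the Hermitian part $\frac{i}{2}(a+a^*)$ transforms tensorially and so defines $\xi_H$ globally, while the skew-Hermitian part transforms exactly as a unitary connection coefficient, defining $\nabla_{A_H,\del_y}$---but the justification should record this rather than appeal to pure conjugation. Equivalently, one can define the $H$--adjoint operator $\fd_y^\dagger$ by $\del_y H(s,t) = H(\fd_y s,t) + H(s,\fd_y^\dagger t)$ and split $\fd_y$ using it, which avoids frames altogether.
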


\begin{proof}
  This is analogous to the existence and uniqueness of the Chern connection.
  In fact, it can be reduced to it;
  see \autoref{Prop_LiftToDimensionFour}.
\end{proof}

This proposition shows that
\autoref{Thm_EBE=Hecke} is equivalent to the bijectivity of the map
\begin{equation*}
  \set*{
    (\delbar,\varphi,\fd_y) \in \sC^\pHecke(\sE_0,\varphi_0;y_0,z_0,\bk)
    :
    \text{\autoref{Eq_EBE_Metric} and } \xi_H(1,\cdot) = 0
  }/\sG
  \to
  \sM^\pHecke(\sE_0,\varphi_0;y_0,z_0,\bk).
\end{equation*}
This in turn is equivalent to the following for every parametrized Hecke modification $(\delbar,\varphi,\fd_y)$:
\begin{enumerate}
\item
  There exists a $u \in \sG^\C$ such that $u_*(\delbar,\varphi,\fd_y)$ satisfies \autoref{Eq_EBE_Metric} and $\xi_H(1,\cdot) = 0$.
\item
  The equivalence class $[u] \in \sG^\C/\sG$ is unique.
\end{enumerate}

The gauge transformed parametrized Hecke modification $u_*(\delbar,\varphi,\fd_y)$ satisfies \autoref{Eq_EBE_Metric} and $\xi_H(1,\cdot) = 0$ if and only if
with respect to gauge transformed Hermitian metric
\begin{equation*}
  K \coloneq u_*H
\end{equation*}
the parametrized Hecke modification $(\delbar,\varphi,\fd_y)$ satisfies \autoref{Eq_EBE_Metric} and $\xi_K(1,\cdot) = 0$. 
Since $K = u_*H$ depends only on $[u] \in \sG^\C/\sG$,
the preceding discussion shows that \autoref{Thm_EBE=Hecke} holds assuming the following.

\begin{prop}
  \label{Prop_ExistenceAndUniquenessOfH}
  Given $(\delbar,\varphi,\fd_y)$ a parametrized Hecke modification,  
  there exists a unique Hermitian metric of the form $K = u_*H$ with $u \in \sG^\C$ such that \autoref{Eq_EBE_Metric} and $\xi_K(1,\cdot) = 0$ hold.
\end{prop}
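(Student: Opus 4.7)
The plan is to recast the problem as a quasilinear elliptic boundary value problem for a Hermitian endomorphism and then apply a Simpson-style continuity argument adapted to the present singular setting. Writing $K = u_*H$ is equivalent to specifying a positive $H$--self-adjoint endomorphism $h \coloneq H^{-1}K$ which equals $\id$ at $\set{0}\times\Sigma$ and which differs from $\id$ near $(y_0,z_0)$ only via a singularity preserving transformation. In these variables, \autoref{Eq_EBE_Metric} together with the boundary condition $\xi_K(1,\cdot) = 0$ becomes a second order quasilinear equation $\sD(h) = 0$ with Dirichlet data at $y=0$, a Neumann-type condition at $y=1$, and a prescribed model singular profile at $(y_0,z_0)$ coming from the Dirac monopole of type $\bk$.

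Uniqueness I would tackle first. Given two solutions $h_1$ and $h_2$, set $s \coloneq \log(h_1^{-1}h_2)$. Following the calculation of \cite[Lemma 3.1(d)]{Simpson1988}, pairing the identity $\sD(h_1) - \sD(h_2) = 0$ with $s$ should yield a pointwise differential inequality exhibiting $\abs{s}^2$ as subharmonic for the natural Laplacian associated with the extended Bogomolny operator. Since $s$ vanishes at $y=0$, satisfies a Neumann-type condition at $y=1$ derived from $\xi_K(1,\cdot) = 0$, and is $O(r^\alpha)$ near $(y_0,z_0)$ because $h_1$ and $h_2$ share the same singularity model, the maximum principle forces $s \equiv 0$.

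For existence I would use a continuity method along the family $(\delbar,t\varphi,\fd_y)$ for $t \in [0,1]$. At $t=0$ the equation decouples from $\varphi$ and reduces to a singular Bogomolny problem whose solvability is known by \citet{Norbury2011}. Openness along the path is the implicit function theorem applied in suitable weighted Hölder or Sobolev spaces: the linearization of $\sD$ at a solution is a formally self-adjoint uniformly elliptic operator whose kernel is trivial by the infinitesimal form of the uniqueness argument, and whose Fredholm property at the Dirac point follows from the explicit structure of the Dirac monopole model. Closedness reduces, as usual, to a uniform $C^0$ bound on $\log h$ along the path; higher regularity then follows by elliptic bootstrapping in weighted spaces.

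The main obstacle will be precisely this uniform $C^0$ estimate near $(y_0,z_0)$. Away from the singular point, standard integration-by-parts combined with Moser iteration in the spirit of \cite{Simpson1988} should supply the needed bound, with boundary contributions at $y \in \set{0,1}$ controlled by the Dirichlet and Neumann conditions. Near $(y_0,z_0)$, however, the model connection is not flat, so naive test function arguments pick up uncontrolled contributions on small spheres around the singular point, and the prior work on Dirac singularities for the Bogomolny equation \cite{Pauly1998,Mochizuki2017} does not apply because of the Higgs field. To overcome this, I would import the blow-up and tangent cone analysis of \cite{Jacob2016a,Chen2017}: rescale about $(y_0,z_0)$, extract a limiting self-adjoint endomorphism on the Dirac monopole background, and exploit uniqueness of the tangent object together with the fixed framing $\Psi$ to force $h \to \id$ at the expected rate $O(r^\alpha)$. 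Combined with the interior and boundary estimates, this closes the $C^0$ bound and hence the continuity method.
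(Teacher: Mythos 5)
Your uniqueness argument is, in substance, the paper's. The paper runs it through \autoref{Prop_DeltaLogTrExpS} (harmonicity of $\tr s$ plus subharmonicity of $\log\tr e^s$ and the arithmetic--geometric mean inequality), whereas you pair the difference of the equations with $s$ to get subharmonicity of $\abs{s}^2$; the identity you need for that is exactly what the paper proves as \autoref{Prop_(KHes-KH,s)}, and both versions use the same three inputs: $s(0,\cdot)=0$, the Neumann condition at $y=1$ extracted from $\xi_K(1,\cdot)=\xi_{Ke^s}(1,\cdot)=0$ via \eqref{Eq_XiKes}, and the $O(r^\alpha)$ control near $(y_0,z_0)$ coming from both metrics lying in one $\sG^\C$--orbit. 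For existence, however, you take a genuinely different route. The paper lifts to the K\"ahler four--manifold $S^1\times M$ (\autoref{Prop_LiftToDimensionFour}), runs the Donaldson--Simpson heat flow on the excised domains $X_\epsilon$ with mixed Dirichlet/Neumann boundary conditions, gets convergence as $t\to\infty$ from a spectral gap, uniform $C^0$ bounds from a barrier comparison, lets $\epsilon\to 0$, and only then performs the singularity analysis of \autoref{Sec_SingularityAnalysis} (Neumann--Poincar\'e inequality plus a Morrey-type decay estimate in the spirit of \cite{Jacob2016a}) to see that the limit lies in the $\sG^\C$--orbit of $H$. Your closedness step correctly anticipates that last ingredient, and a $t$--uniform $C^0$ bound along your path can indeed be obtained by the same barrier comparison the paper uses in \autoref{Prop_SimpsonFlow}.

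Two steps of your continuity scheme conceal genuine unproven content rather than routine detail. First, the anchor at $t=0$: \citet{Norbury2011} gives a moduli-space bijection for the Bogomolny equation, not directly the statement you need, namely a solution metric of the form $u_*H$ with $u\in\sG^\C$ singularity preserving relative to the fixed framing $\Psi$ and satisfying the present boundary conditions (in particular $\xi|_{\set{1}\times\Sigma}=0$); translating his result into that normal form already requires the kind of decay analysis near the Dirac point that you postpone to the closedness step, so the starting point is not free. Second, openness: the linearization of $\fm$ at a solution carries a potential of size $r^{-2}$ coming from $\ad_{\xi_\bk}$ with $\xi_\bk\sim r^{-1}$ (the Higgs terms are merely bounded), so Fredholmness and surjectivity in weighted H\"older or Sobolev spaces require an indicial-root analysis of the Dirac model operator and a cokernel computation in the dual weight. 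This is precisely the model-specific linear theory that the paper deliberately avoids (it never linearizes at the puncture; the nonlinear hole-filling argument of \autoref{Prop_AprioriMorreyEstimate} substitutes for it), and it is not available off the shelf in the presence of the Higgs field. With those two pieces supplied your scheme would close, but as written the existence half has real gaps, whereas the paper's excision-plus-heat-flow argument needs neither the $t=0$ anchor nor any weighted Fredholm theory.
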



\section{Lift to dimension four}
\label{Sec_LiftToDimensionFour}

It will be convenient to lift the extended Bogomolny equation to dimension four,
since this allows us to directly make use of the work of \citet{Simpson1988}.

\begin{prop}
  \label{Prop_LiftToDimensionFour}
  Set
  \begin{equation*}
    X \coloneq S^1\times M.
  \end{equation*}
  Denote by $\alpha$ the coordinate on $S^1$.
  Regard $X$ as a Kähler manifold equipped with the product metric and the Kähler form
  \begin{equation*}
    \omega = \rd\alpha\wedge \rd y + \vol_\Sigma.
  \end{equation*}
  Denote by $\bE$ the pullback of $E$ to $X$.
  Given a parametrized Hecke modification $(\delbar,\varphi,\fd_y)$,
  set
  \begin{equation*}
    \bdelbar \coloneq \frac12(\del_\alpha + i\rd y \cdot \fd_y) + \delbar_E \qandq
    \bvarphi \coloneq \varphi.
  \end{equation*}
  The following hold:
  \begin{enumerate}
  \item
    \label{Prop_LiftToDimensionFour_Holomorphic}
    The operator $\bdelbar$ defines a holomorphic structure on $\bE$;
    moreover,
    \begin{equation*}
      \bdelbar\bvarphi = 0 \qandq \bvarphi\wedge\bvarphi = 0.    
    \end{equation*}
  \item
    \label{Prop_LiftToDimensionFour_Metric}
    Let $\bK$ be the pullback of a Hermitian metric $K$ on $E$.
    Denote by $A_{\bK}$ the Chern connection corresponding to $\bdelbar$ with respect to $\bK$.
    The equation \autoref{Eq_EBE_Metric} holds if and only if
    \begin{equation*}
      i\Lambda(F_{A_{\bK}} + [\bvarphi\wedge\bvarphi^{*,\bK}]) = 0.      
    \end{equation*}
  \end{enumerate}
\end{prop}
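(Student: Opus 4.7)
The plan is to introduce the complex coordinate $w = \alpha + iy$ on $S^1\times[0,1]$, under which $\del_{\bar w} = \tfrac12(\del_\alpha + i\del_y)$ and $\omega = \tfrac{i}{2}\rd w\wedge \rd\bar w + \vol_\Sigma$ exhibits $X$ as a Kähler manifold. With this convention, $\bdelbar$ is read as a $(0,1)$-operator on $\bE$ whose $\rd\bar z$-component is the lifted $\delbar$ and whose $\rd\bar w$-component is $\tfrac12(\del_\alpha + i\fd_y)$. Part \autoref{Prop_LiftToDimensionFour_Holomorphic} then becomes a bitype check and part \autoref{Prop_LiftToDimensionFour_Metric} a direct curvature computation on a product manifold.

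For part \autoref{Prop_LiftToDimensionFour_Holomorphic}, I would verify integrability $\bdelbar^2 = 0$ by type. The $(\rd\bar w)^2$ and $(\rd\bar z)^2$ pieces vanish for dimension reasons, so the only content is the mixed $\rd\bar w\wedge \rd\bar z$ term, which is proportional to $[\del_\alpha + i\fd_y,\,\delbar]$. This splits as $[\del_\alpha,\delbar]=0$ (the lifted $\delbar$ is $\alpha$-independent) and $[\fd_y,\delbar]=0$, which is the second identity in \autoref{Eq_ParametrizedHeckeModification}. The same type-decomposition applied to $\bdelbar\bvarphi$ produces an $\rd\bar z$-term equal to $\delbar\varphi$ (vanishing by the first identity) and an $\rd\bar w$-term proportional to $\del_\alpha\varphi + i[\fd_y,\varphi]$ (vanishing by $\alpha$-independence and the third identity). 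Finally, $\bvarphi\wedge\bvarphi=0$ is automatic since $\varphi$ is a $(1,0)$-form in the single complex direction $\rd z$.

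For part \autoref{Prop_LiftToDimensionFour_Metric}, standard Hermitian-geometry theory yields a unique Chern connection $A_\bK$ of $(\bE,\bdelbar)$ with respect to $\bK$. Because $\bK$ is pulled back from $M$ and $\bdelbar$ commutes with $\del_\alpha$, this connection must be of the form
\begin{equation*}
A_{\bK} = \pi_M^*A + \xi\,\rd\alpha
\end{equation*}
for a uniquely determined $K$-unitary 3D connection $A$ on $E$ and a uniquely determined $K$-skew-Hermitian endomorphism $\xi$; matching the $\rd\bar w$-component of $\nabla_{A_\bK}^{0,1}$ with $\tfrac12(\del_\alpha + i\fd_y)$ forces $\fd_y = \nabla_{A,\del_y} - i\xi$. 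A direct component computation then gives $F_{A_\bK}(\del_\alpha,\del_y) = -\nabla_{A,\del_y}\xi$ with the $\Sigma$-components of $F_{A_\bK}$ agreeing with those of $F_A$, while $[\bvarphi\wedge\bvarphi^{*,\bK}] = [\varphi\wedge\varphi^{*,K}]$ lives entirely on the $\Sigma$ slice. Applying $\Lambda_X = \iota_{\del_y}\iota_{\del_\alpha} + \Lambda_\Sigma$ therefore gives
\begin{equation*}
i\Lambda_X(F_{A_\bK} + [\bvarphi\wedge\bvarphi^{*,\bK}]) = i\Lambda_\Sigma(F_A + [\varphi\wedge\varphi^{*,K}]) - i\nabla_{A,\del_y}\xi,
\end{equation*}
which vanishes precisely when \autoref{Eq_EBE_Metric} holds.

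No serious obstacle is anticipated: once the holomorphic coordinate $w = \alpha + iy$ is chosen, the rest is bitype bookkeeping together with a routine product-manifold curvature calculation. The only delicate point is to fix the correct sign and factor of $i$ in the $\rd\alpha$-component of the 4D Chern connection, which is forced by jointly requiring $\bK$-unitarity and agreement of the $\rd\bar w$-component of $\nabla_{A_\bK}^{0,1}$ with $\tfrac12(\del_\alpha + i\fd_y)$.
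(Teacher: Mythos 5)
Your proposal is correct and follows essentially the same route as the paper: part \autoref{Prop_LiftToDimensionFour_Holomorphic} is the integrability/type check reducing to \autoref{Eq_ParametrizedHeckeModification}, and part \autoref{Prop_LiftToDimensionFour_Metric} is the observation that the Chern connection is $S^1$--invariant of the form $\pi^*A_K + \xi_K\,\rd\alpha$, whose curvature contributes $-\nabla_{A_K,\del_y}\xi_K$ in the $\rd\alpha\wedge\rd y$ direction, so that $i\Lambda$ reproduces \autoref{Eq_EBE_Metric}. You merely spell out the coordinate $w=\alpha+iy$ and the sign bookkeeping that the paper leaves as a ``simple computation.''
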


\begin{proof}
  It follows from \autoref{Eq_EBE_Holomorphic} that  
  \begin{equation*}
    \bdelbar^2 = \delbar_E^2 + i\rd y\wedge[\fd_y,\delbar_E] = 0.
  \end{equation*}
  Consequently,
  $\bdelbar$ defines a holomorphic structure.
  It also follows from \autoref{Eq_EBE_Holomorphic} that $\bdelbar\bvarphi = 0$; while $\bvarphi\wedge\bvarphi = 0$ is obvious.
  This proves \autoref{Prop_LiftToDimensionFour_Holomorphic}.

  Denote by $\pi\co X \to M$ the projection map.
  A computation shows that
  \begin{equation*}
    A_{\bK} = \pi^*A_K + \rd\alpha\wedge(\del_\alpha + \xi_K).
  \end{equation*}
  Therefore,
  \begin{equation*}
    F_{A_{\bK}} = F_{A_K} - \rd \alpha\wedge\rd y \cdot \nabla_{A_K,\del_y}\xi_K
  \end{equation*}
  and thus
  \begin{equation*}
    i\Lambda(F_{A_{\bK}} + [\bvarphi\wedge\bvarphi^{*,\bK}])
    = \pi^*\left[i\Lambda(F_{A_K} + [\varphi\wedge\varphi^{*,K}]) - i\nabla_{A_K,\del_y} \xi_K\right].
  \end{equation*}
  This proves \autoref{Prop_LiftToDimensionFour_Metric}.
\end{proof}


\section{Uniqueness of $K$}
\label{Sec_Uniqueness}

Assume the situation of \autoref{Prop_ExistenceAndUniquenessOfH}.
Given a Hermitian metric $K$ on $E$,
set
\begin{equation*}
  \fm(K) \coloneq i\Lambda(F_{A_K} + [\varphi\wedge\varphi^{*,K}]) - i\nabla_{A_K,\del_y} \xi_K.
\end{equation*}
Thus, \eqref{Eq_EBE_Metric} holds with respect to $K$ if and only if $\fm(K) = 0$.

\begin{prop}
  \label{Prop_DeltaLogTrExpS}
  For every Hermitian metric $K$ on $E$ and $s \in \Gamma(i\fu(E,K))$,
  \begin{equation*}
    \Delta \tr s = 2\tr(\fm(Ke^s) - \fm(K))
  \end{equation*}
  and
  \begin{equation*}
    \Delta \log \tr e^s \leq 2\abs{\fm(Ke^s)} + 2\abs{\fm(K)}
  \end{equation*}
  Furthermore, if $s$ is trace-free,
  then $\fm(Ke^s)$ and $\fm(K)$ can be replaced by their trace-free parts.
\end{prop}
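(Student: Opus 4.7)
The plan is to reduce both identities to the standard pointwise formulas of \citet{Simpson1988} for Higgs bundles over a Kähler manifold, via the four-dimensional lift of \autoref{Prop_LiftToDimensionFour}. I would first pull $K$, $s$, and the parametrized Hecke modification back to $X = S^1 \times M$ to obtain $\bK$, $\bs \in \Gamma(i\fu(\bE,\bK))$, and $(\bdelbar,\bvarphi)$; the pullback of $Ke^s$ is then $\bK e^{\bs}$. Since all of these data are $S^1$-invariant, the Kähler Laplacian $\Delta_X$ on $X$ applied to an $S^1$-invariant function agrees with the Riemannian Laplacian $\Delta$ on $M$. Setting
\begin{equation*}
  \hat\fm(\bK) \coloneq i\Lambda\bigl(F_{A_{\bK}} + [\bvarphi\wedge\bvarphi^{*,\bK}]\bigr),
\end{equation*}
\autoref{Prop_LiftToDimensionFour} gives $\pi^*\fm(K) = \hat\fm(\bK)$, and analogously for $Ke^s$. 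It therefore suffices to prove the two statements on $X$ with $\fm$ replaced by $\hat\fm$.

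On $X$ these are precisely the pointwise estimates underlying the Donaldson--Uhlenbeck--Yau proof of the Kobayashi--Hitchin correspondence for Higgs bundles; see \cite[Lemma 3.1 and Section 5]{Simpson1988}. The first identity follows by differentiating along the path $\bK_t = \bK e^{t\bs}$: a standard computation combining the variational formula for the Chern curvature with the corresponding variation of $[\bvarphi\wedge\bvarphi^{*,\bK_t}]$ shows that $\del_t \tr\hat\fm(\bK_t) = \tfrac12 \Delta_X\tr\bs$ after using the Kähler identity $i\Lambda\bdelbar\del = \tfrac12 \Delta_X$ and taking a trace to eliminate commutator contributions; integrating in $t$ from $0$ to $1$ yields the claimed formula. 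For the second inequality I would follow Donaldson's eigenvalue argument: starting from $\Delta_X\log f \leq \Delta_X f / f$ with $f = \tr e^{\bs}$ and diagonalizing $\bs$ pointwise, the gradient terms appearing in $\Delta_X\tr e^{\bs}$ drop out favorably by convexity of the divided-difference kernel $\Psi(x,y) = (e^y - e^x)/(y-x)$, and the remaining terms are bounded pointwise by $\abs{\hat\fm(\bK e^{\bs})}\tr e^{\bs}$ and $\abs{\hat\fm(\bK)}\tr e^{\bs}$, yielding the stated upper bound.

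Descending both identities from $X$ back to $M$ is trivial since every term is already $S^1$-invariant, and this produces the two formulas of the proposition. There is no substantive obstacle: the computations are entirely pointwise and local in the smooth region $M \setminus \set{(y_0,z_0)}$, so the Dirac-type singularity plays no role here. The only care required is the bookkeeping of numerical factors and of the $\fd_y$-contribution absorbed into the lifted Chern curvature $F_{A_{\bK}}$, both of which have already been handled by \autoref{Prop_LiftToDimensionFour}.
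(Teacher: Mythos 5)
Your proposal is correct and takes essentially the same route as the paper: the paper's proof consists precisely of invoking \autoref{Prop_LiftToDimensionFour} to identify $\fm$ with the $S^1$--invariant quantity $i\Lambda\paren[\big]{F_{A_{\bK}} + [\bvarphi\wedge\bvarphi^{*,\bK}]}$ on $X = S^1\times M$ and then citing \cite[Lemma 3.1(c) and (d)]{Simpson1988}. Your additional sketch of how those two pointwise statements of Simpson are proved (variation along $\bK e^{t\bs}$ for the trace identity, Donaldson's eigenvalue/convexity argument for the $\log\tr e^{\bs}$ inequality) merely expands the citation rather than changing the argument.
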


\begin{proof}
  This follows from \cite[Lemma 3.1(c) and (d)]{Simpson1988} and \autoref{Prop_LiftToDimensionFour}.
\end{proof}

\begin{proof}[Proof of uniqueness in \autoref{Prop_ExistenceAndUniquenessOfH}]
  Suppose $K$ and $Ke^s$ are two Hermitian metrics in the $\sG^\C$--orbit of $H$ such that $\fm(K) = \fm(Ke^s) = 0$ and $\xi_K(1,\cdot) = \xi_{Ke^s}(1,\cdot) = 0$.
  It follows from the preceding proposition that $\tr s$ is harmonic and $\log \tr e^s$ is subharmonic.
  
  Since $K$ and $Ke^s$ are contained the the same $\sG^\C$--orbit,
  \begin{equation*}
    s(0,\cdot) = 0 \qandq
    \abs{s} = O(r^\alpha).
  \end{equation*}
  for some $\alpha > 0$.
  The computation proving \autoref{Prop_ComplexTriple=>GaugeTheoryData} shows that
  \begin{equation}
    \label{Eq_XiKes}
    \xi_{Ke^s}
    =
    \frac12\(\xi_K + e^{-s}\xi_Ke^s - ie^{-s}(\nabla_{A_K,\del_y}e^s)\).
  \end{equation}
  Therefore,
  \begin{equation*}
    \nabla_{A_K,\del_y} s(1,\cdot) = 0.
  \end{equation*}

  Since $\tr s$ is harmonic, bounded, vanishes at $y=0$, and satisfies Neumann boundary conditions at $y = 1$,
  it follows that $\tr s = 0$.
  Furthermore, since $\log \tr e^s$ is subharmonic,
  the above together with the maximum principle implies $\log\tr e^s \leq  \log \tr e^0 =  \log\rk E$.
  By the inequality between arithmetic and geometric means,
  \begin{equation*}
    \frac{\tr e^s}{\rk E} \geq e^{\tr s} = 1;
    \quad\text{that is:}\quad
    \log\tr e^s \geq \log\rk E
  \end{equation*}
 with equality if and only if $s = 0$.
\end{proof}


\section{Construction of $K$}
\label{Sec_Construction}

This section is devoted to the construction of $K$ using the heat flow method with boundary conditions \cite{Simpson1988,Donaldson1992}.
The analysis of its behavior at the singularity is discussed in the next section.

\begin{prop}
  \label{Prop_ConstructionOfH}
  Given a parametrized Hecke modification, $(\delbar,\varphi,\fd_y)$ on $(E,H)$,
  there exists a bounded section $s \in \Gamma(i\fu(E,H))$ such that for $K \coloneq He^s$ both $\fm(K) = 0$ and $\xi_K(1,\cdot) = 0$ hold.
\end{prop}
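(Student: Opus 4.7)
The plan is to apply the Donaldson--Simpson heat flow method. By \autoref{Prop_LiftToDimensionFour} the equation \eqref{Eq_EBE_Metric} is equivalent to the Hermitian--Einstein equation
\begin{equation*}
  i\Lambda(F_{A_{\bK}} + [\bvarphi\wedge\bvarphi^{*,\bK}]) = 0
\end{equation*}
for a pullback metric $\bK$ on $(\bE,\bdelbar,\bvarphi) \to X = S^1 \times M$. After an initial adjustment of $H$ within its $\sG^\C$-orbit arranging $\xi_H(1,\cdot) = 0$, \autoref{Eq_XiKes} shows that writing $K = He^s$ converts the condition $\xi_K(1,\cdot) = 0$ into the Neumann boundary condition $\nabla_{A_K,\del_y} s|_{y=1} = 0$, while the requirement $u \in \sG^\C$ forces the Dirichlet condition $s|_{y=0} = 0$.

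I would run Simpson's flow $K_t = He^{s_t}$ starting at $s_0 = 0$,
\begin{equation*}
  \del_t s_t = -2\fm(K_t),
\end{equation*}
subject to the above Dirichlet and Neumann data at $y \in \set{0,1}$ and pinning $K_t$ to the Dirac monopole model near $(y_0,z_0)$. Short-time existence is standard quasilinear parabolic theory on the smooth locus. The key a priori estimates come from \autoref{Prop_DeltaLogTrExpS}: $\tr s_t$ solves a linear heat equation, and $\log \tr e^{\pm s_t}$ are subsolutions of heat-type equations with source controlled by $\abs{\fm(K_t)} + \abs{\fm(H)}$. A parabolic maximum principle applied on $X$ with the prescribed boundary and singular data then yields a uniform $C^0$ bound on $s_t$.

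Given the $C^0$ bound, Simpson's bootstrap gives interior higher regularity, long-time existence, and monotonicity of the Donaldson functional with derivative $-\|\fm(K_t)\|_{L^2}^2$. Extracting a subsequential limit then yields a bounded section $s_\infty$ with $\fm(He^{s_\infty}) = 0$; the uniqueness established in \autoref{Sec_Uniqueness} upgrades this to convergence of the full flow, producing the desired $s$.

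The principal obstacle is that all of this takes place on the noncompact, singular punctured manifold with a Dirac-type singularity along $S^1 \times \set{(y_0,z_0)}$. I would handle it by running the flow on an exhaustion by relatively compact subdomains obtained by removing balls $B_\epsilon((y_0,z_0))$ of shrinking radius, imposing the Dirac model as an inner boundary condition, and passing to the $\epsilon \to 0$ limit. The asymptotic estimates $O(r^{-k-1+\alpha})$ built into the parametrized Hecke modification data (\autoref{Def_ParametrizedHeckeModification}), combined with scale-invariant elliptic estimates on punctured balls in the spirit of \cite{Jacob2016a,Chen2017}, will be essential for producing $\epsilon$-uniform $C^0$ bounds and for ensuring that boundedness of $s$ passes to the limit.
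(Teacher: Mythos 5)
Your proposal is correct and follows essentially the same route as the paper: lift to $S^1\times M$, run Simpson's heat flow with Dirichlet data at $y=0$ and on the boundary of excised balls $B_\epsilon(y_0,z_0)$ and Neumann data at $y=1$, obtain $\epsilon$--uniform $C^0$ bounds on $s$ from \autoref{Prop_DeltaLogTrExpS} together with a maximum principle, and pass to the limit $\epsilon\to 0$, recovering $\xi_K(1,\cdot)=0$ from \autoref{Eq_XiKes}. The only differences are implementation details: the paper first kills the trace part of $\fm(H)$ by solving a scalar Poisson equation (with a barrier argument) and then flows only the trace-free equation, and it deduces convergence of the flow from exponential decay of the error via a spectral gap and Simpson's finite-length lemma rather than from the Donaldson functional and uniqueness.
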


The proof requires the following result as a preparation.

\begin{prop}
  \label{Prop_SimpsonFlow}
  Assume the situation of \autoref{Prop_LiftToDimensionFour}.
  For $\epsilon > 0$,
  set
  \begin{equation*}
    X_\epsilon \coloneq S^1\times ([0,1]\times \Sigma \setminus B_\epsilon(y_0,z_0)).
  \end{equation*}
  Denote the pullback of $H$ to $X$ by $\bH$.
  Suppose that
  \begin{equation*}
    \Abs{i\Lambda(F_{A_{\bH}}^\circ + [\bvarphi\wedge\bvarphi^{*,\bH}])}_{L^\infty} < \infty.
  \end{equation*}
  The following hold:
  \begin{enumerate}
  \item
    \label{Prop_SimpsonFlow_LongTimeExistence}
    Let $\epsilon > 0$.
    There exists a unique solution $(\bK_t^\epsilon)_{t \in [0,\infty)}$ of
    \begin{equation}
      \label{Eq_SimpsonFlow}
      (\bK_t^\epsilon)^{-1}\del_t\bK_t^\epsilon
      =
      -i\Lambda(F_{A_{\bK_t^\epsilon}}^\circ + [\bvarphi\wedge\bvarphi^{*,\bK_t^\epsilon}])
    \end{equation}
    on $X_\epsilon$ with initial condition
    \begin{equation*}
      \bK_0^\epsilon = \bH|_{X_\epsilon}
    \end{equation*}
    and subject to the boundary conditions
    \begin{align*}
      \bK_t^\epsilon|_{S^1\times\set{0}\times \Sigma}
      &=
        \bH|_{S^1\times\set{0}\times \Sigma}, \\
      \bK_t^\epsilon|_{S^1\times\del B_\epsilon(y_0,z_0)}
      &=
        \bH|_{S^1\times \del B_\epsilon(y_0,z_0)}, \qand \\
      (\nabla_{A_\bH,\del_y}\bK_t^\epsilon)|_{S^1\times \set{1}\times \Sigma}
      &=
        0.
    \end{align*}
  \item
    \label{Prop_SimpsonFlow_Convergence}
    As $t\to \infty$, the Hermitian metrics $\bK_t^\epsilon$ converge in $C^\infty$ to a solution $\bK^\epsilon$ of
    \begin{equation*}
      i\Lambda(F_{\bK^\epsilon}^\circ + [\bvarphi\wedge\bvarphi^{*,\bK^\epsilon}]) = 0.
    \end{equation*}
  \item    
    \label{Prop_SimpsonFlow_UniformEstimates}
    The section $s_\epsilon \in \Gamma(X_\epsilon,i\su(\bE,\bH))$ defined by $\bK^\epsilon = \bH e^{s_\epsilon}$ is $S^1$--invariant and satisfies 
    \begin{equation*}
      \Abs{s_\epsilon}_{L^\infty} \lesssim 1
      \quad\text{as well as}\quad
      \Abs{s_\epsilon}_{C^k(X_\delta)} \lesssim_{k,\delta} 1
    \end{equation*}
    for every $k \in \N$ and $\delta > \epsilon$.
  \end{enumerate}
\end{prop}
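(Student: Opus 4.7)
The plan is to adapt the Donaldson--Simpson heat flow method for the Hermitian--Einstein equation on a Higgs bundle, applied via \autoref{Prop_LiftToDimensionFour} to the compact (with corners) Kähler four-manifold $X_\epsilon$. Writing $\bK_t^\epsilon = \bH e^{s_t^\epsilon}$ converts \autoref{Eq_SimpsonFlow} into a quasilinear strictly parabolic equation for $s_t^\epsilon \in \Gamma(i\fu(\bE,\bH))$, subject to the Dirichlet condition $s_t^\epsilon = 0$ on $S^1 \times \set{0} \times \Sigma$ and on $S^1 \times \del B_\epsilon(y_0,z_0)$ and the Neumann condition $\nabla_{A_\bH,\del_y} s_t^\epsilon = 0$ at $y = 1$. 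For part \autoref{Prop_SimpsonFlow_LongTimeExistence}, short-time existence and uniqueness follow from standard theory for mixed parabolic boundary value problems on a compact manifold-with-corners. To extend the flow to all $t$, I would apply \autoref{Prop_DeltaLogTrExpS} along the flow: the $L^\infty$ hypothesis on the initial curvature forces $\tr s_t^\epsilon$ to satisfy a linear heat equation with bounded inhomogeneity and $\log\tr e^{\pm s_t^\epsilon}$ to be parabolic subsolutions, so the parabolic maximum principle gives a uniform $C^0$ bound on $s_t^\epsilon$; standard parabolic regularity then bootstraps this to $C^k$ bounds on each $[0,T]$, ruling out finite-time blowup.

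For part \autoref{Prop_SimpsonFlow_Convergence}, I would use the Donaldson--Simpson functional $\cM(\bH,\bK)$ on the space of Hermitian metrics: along the flow one computes
$\frac{\rd}{\rd t}\cM(\bH,\bK_t^\epsilon)
 = -\Abs{i\Lambda(F_{A_{\bK_t^\epsilon}}^\circ + [\bvarphi\wedge\bvarphi^{*,\bK_t^\epsilon}])}_{L^2(X_\epsilon)}^2$,
with the boundary integrals produced by the integration by parts vanishing thanks to the Dirichlet and Neumann data. Monotonicity of $\cM$ combined with the $C^k$ bounds from the previous step yields subsequential smooth convergence of $\bK_t^\epsilon$ to a Hermitian--Einstein metric $\bH^\epsilon$; geodesic convexity of $\cM$ (the same tool that underlies the uniqueness argument in \autoref{Sec_Uniqueness}) promotes this to convergence along the entire flow.

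For part \autoref{Prop_SimpsonFlow_UniformEstimates}, the $S^1$-invariance of $s_\epsilon$ is automatic, since the initial data, boundary data and equation are all $S^1$-equivariant and the solution is unique. For the $L^\infty$ bound uniform in $\epsilon$, I would apply \autoref{Prop_DeltaLogTrExpS} to the limit, where $\fm(\bH^\epsilon) = 0$: then $\Delta \log \tr e^{\pm s_\epsilon} \leq 2\abs{\fm(\bH)}$, the right-hand side being bounded by the standing $L^\infty$ hypothesis; combined with $s_\epsilon = 0$ on the Dirichlet pieces and the Neumann condition at $y = 1$, comparison with a barrier built from a fixed subharmonic function on $[0,1]\times \Sigma$ (independent of $\epsilon$) yields $\Abs{s_\epsilon}_{L^\infty} \lesssim 1$. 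Interior $C^k$ estimates on $X_\delta$ for $\delta > \epsilon$ follow by standard elliptic bootstrapping for the Hermitian--Einstein equation, which is uniformly elliptic on $X_\delta$.

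The main obstacle is precisely the uniform-in-$\epsilon$ $L^\infty$ bound: the Dirichlet boundary $S^1 \times \del B_\epsilon(y_0,z_0)$ shrinks as $\epsilon \to 0$, so a naive comparison using only the boundary data would degenerate, and on its own the Donaldson functional bound is not enough to pin down $\Abs{s_\epsilon}_{L^\infty}$. The crucial observation is that the forcing appears through its $L^\infty$-norm on the ambient $[0,1]\times \Sigma$, which allows the barrier to be constructed once and for all independently of $\epsilon$; this is what makes the singularity analysis of the next section, and ultimately the passage $\epsilon \to 0$, possible.
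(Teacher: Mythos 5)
Your overall strategy is the paper's: run the Donaldson--Simpson heat flow on $X_\epsilon$ with mixed Dirichlet/Neumann data, and obtain the crucial $\epsilon$--independent $L^\infty$ bound by applying \autoref{Prop_DeltaLogTrExpS} to the limit and comparing with a barrier $f$ solving $\Delta f = 2\abs{i\Lambda(F_{A_{\bH}}^\circ + [\bvarphi\wedge\bvarphi^{*,\bH}])}^2$ with $f=0$ at $y=0$ and Neumann at $y=1$, constructed once on the ambient manifold independently of $\epsilon$ --- this is exactly the argument in the paper's proof of part \autoref{Prop_SimpsonFlow_UniformEstimates}, and your closing paragraph correctly identifies why the shrinking Dirichlet sphere causes no loss. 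For part \autoref{Prop_SimpsonFlow_LongTimeExistence} the paper simply cites \cite[Section 6]{Simpson1988} rather than redoing the parabolic theory, which is harmless. The genuine divergence is in part \autoref{Prop_SimpsonFlow_Convergence}: you argue via monotonicity of the Donaldson functional $\cM$ plus convexity, whereas the paper observes that $f_t = \abs{i\Lambda(F_{A_{\bK_t^\epsilon}}^\circ + [\bvarphi\wedge\bvarphi^{*,\bK_t^\epsilon}])}^2$ satisfies $(\del_t + \Delta)f_t \leq 0$, and that the Dirichlet boundary pieces force the first eigenvalue of $\Delta$ on $X_\epsilon$ to be positive, so $\Abs{f_t}_{L^\infty} \leq c e^{-\lambda t}$; this gives $\int_0^\infty \sqrt{f_t}\,\rd t < \infty$, i.e.\ finite length of the path $t \mapsto \bK_t^\epsilon$, and convergence then follows from \cite[Lemma 6.4]{Simpson1988}. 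Your route has a soft spot as written: monotonicity of $\cM$ yields nothing unless $\cM(\bH,\bK_t^\epsilon)$ is bounded below (or, equivalently, you have estimates on $s_t^\epsilon$ uniform in $t$ to extract a limit), but the $C^k$ bounds you produced in part \autoref{Prop_SimpsonFlow_LongTimeExistence} are only on finite intervals $[0,T]$, so the passage to $t \to \infty$ and the promotion from subsequential to full convergence are not yet justified; one must either upgrade the maximum-principle bound to a time-independent supersolution and then get uniform-in-$t$ parabolic estimates, or use the paper's exponential-decay argument, which sidesteps the issue entirely and also directly gives convergence of the whole flow rather than of a subsequence. The paper's mechanism is thus both shorter and quantitatively stronger here, while your functional-theoretic route would work after inserting the uniform-in-$t$ estimate. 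Two cosmetic points: $s_\epsilon$ is trace-free (the flow uses $F^\circ$, so $\det$ is fixed and $s_\epsilon \in i\su(\bE,\bH)$; a one-sided bound on $\log\tr e^{s_\epsilon}$ then suffices), and the $C^k$ bounds on $X_\delta$ require estimates up to the boundary portions at $y=0$ and $y=1$, not purely interior ones --- this is why the paper invokes \cite[Lemma 6.4]{Simpson1988}, the boundary-value extension of \cite[Lemma 19]{Donaldson1985}, before bootstrapping.
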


\begin{proof}
  \autoref{Prop_SimpsonFlow_LongTimeExistence} follows from \citet[Section 6]{Simpson1988}.

  Set
  \begin{equation*}
    f_t \coloneq \abs{i\Lambda(F_{\bK_t}^\circ + [\bvarphi\wedge\bvarphi^*])}_{\bK_t}^2.
  \end{equation*}
  By a short computation,
  we have
  \begin{equation*}
    (\del_t+\Delta) f_t \leq 0.
  \end{equation*}
  The spectrum of $\Delta$ on $X_\epsilon$ with Dirichlet boundary conditions at $y = 0$ and at distance $\epsilon$ to the singularity as well as Neumann boundary conditions at $y = 0$ is positive.
  Therefore, there are $c,\lambda > 0$ such that
  \begin{equation*}
    \Abs{f_t}_{L^\infty} \leq ce^{-\lambda t}.
  \end{equation*}
  Consequently,
  \begin{equation*}
    \sup_{p\in X_\epsilon} \int_{0}^\infty  \sqrt{f_t} \rd t < \infty
  \end{equation*}
  This means that the path $\bK_t^\epsilon$ has finite length in the space of Hermitian metrics.
  \autoref{Prop_SimpsonFlow_Convergence} thus follows from \cite[Lemma 6.4]{Simpson1988}.
  The $S^1$--invariance of $s_\epsilon$ follows from the $S^1$--invariance of the initial condition.from \cite[Theorem 1]{Simpson1988}.

  Since $s_\epsilon$ is $S^1$--invariant and trace-free,
  by \autoref{Prop_LiftToDimensionFour} and \autoref{Prop_DeltaLogTrExpS},
  \begin{equation*}
    \Delta \log \tr(e^{s_\epsilon}) \leq 2 \abs{i\Lambda(F_{A_{\bH}}^\circ + [\bvarphi\wedge\bvarphi^{*,\bH}])}^2.
  \end{equation*}
  Let $f$ be the solution of
  \begin{equation*}
    \Delta f = 2 \abs{i\Lambda(F_{A_{\bH}}^\circ + [\bvarphi\wedge\bvarphi^{*,\bH}])}^2
  \end{equation*}
  subject to the boundary conditions
  \begin{equation*}
    f|_{S^1\times\set{0}\times\Sigma} = 0 \qandq
    \del_y f|_{S^1\times\set{1}\times\Sigma} = 0.
  \end{equation*}
  Choose a constant $c$ such that $f+c > 0$.
  Set
  \begin{equation*}
    g \coloneq \log\tr(e^{s_\epsilon}) - (f+c).
  \end{equation*}
  The function $g$ is subharmonic on $X_\epsilon$.
  Thus it achieves its maximum on the boundary.  
  On $S^1\times \del B_\epsilon(y_0,z_0)$ and $S^1\times\set{0}\times\Sigma$,
  the function $g$ is negative.
  At $S^1\times \set{1}\times \Sigma$, $\del_y f = 0$.
  By the reflection principle, the maximum is not achieved at $y=1$ unless $g$ is constant.
  It follows that $g \leq 0$.
  This shows that $\abs{\log\tr(e^{s_\epsilon})}$ is bounded independent of $\epsilon$.
  Since $s$ is trace-free,
  it follows that $\abs{s_\epsilon}$ is bounded independent of $\epsilon$.
  By \cite[Lemma 6.4]{Simpson1988}, which is an extension of \cite[Lemma 19]{Donaldson1985} with boundary conditions, and elliptic bootstrapping the asserted $C^k$ bounds on $s_\epsilon$ follow.
\end{proof}

\begin{proof}[Proof of \autoref{Prop_ConstructionOfH}]
  Without loss of generality we can assume that $H$ is such that $\xi_H$ vanishes at $y = 1$.
  
  There is a unique $f \in C^\infty([0,1]\times\Sigma\setminus\set{y_0,z_0})$ which satisfies
  \begin{equation*}
    \frac12 \Delta f = \tr(i\Lambda F_{A_{H}} -i\nabla_{A_{H},\del_y}\xi_{H}),
  \end{equation*}
  is bounded, vanishes at $y = 0$, and satisfies Neumann boundary conditions at $y=0$.
  A barrier argument shows that $\abs{f} = O(r^\alpha)$ for some $\alpha > 0$.
  Replacing $H$ with $He^f$,
  we may assume that
  \begin{equation*}
    \tr(i\Lambda F_{A_H} -i\nabla_{A_H,\del_y}\xi_H) = 0.
  \end{equation*}
  
  For every $s \in \Gamma(i\su(E,H))$, the above condition holds for $He^s$ instead of $H$ as well.
  Let $s_\epsilon$ be as in \autoref{Prop_SimpsonFlow}.
  Take the limit of $s_\epsilon$ on each $X_\delta$ as first $\epsilon$ tends to zero and then $\delta$ tends to zero.
  This limit is the pullback of a section $s$ defined over $[0,1]\times\Sigma\setminus\set{y_0,z_0}$ which has the desired properties.
  Since $\nabla_{A_H,\del_y} s$ vanishes at $y = 1$,
  it follows from \autoref{Eq_XiKes} that $\xi_K$ vanishes at $y = 1$.
\end{proof}


\section{Singularity analysis}
\label{Sec_SingularityAnalysis}

It remains to analyze the section $s$ constructed via \autoref{Prop_ConstructionOfH} near the singularity.
The following result completes the proof of \autoref{Prop_ExistenceAndUniquenessOfH} and thus \autoref{Thm_EBE=Hecke}.

\begin{prop}
  \label{Prop_SingularityAnalysis}
  Consider the unit ball $B \subset \R\times \C$ with a metric $g = g_0 + O(r^2)$.
  Set $\dot B \coloneq B\setminus\set{0}$.
  Let $\bk \in \Z^r$ be such that \autoref{Eq_KIncreasing} and let $\alpha > 0$.
  Let $(\delbar,\phi,\fd_y)$ be a parametrized Hecke modification on $(E_\bk,H_\bk)$.
  If $s \in \Gamma(i\fu(E_\bk,H_\bk))$ is bounded and satisfies
  \begin{equation*}
    \fm(H_\bk e^s) = 0,
  \end{equation*}
  then there is an $\alpha > 0$ and $s_0 \in \Gamma(i\fu(E_\bk,H_\bk))$ such that
  \begin{equation*}
    \nabla_{A_\bk} s_0 = 0 \qandq [\xi_\bk,s_0] = 0
  \end{equation*}
  and for every $k \in \N_0$
  \begin{equation*}
    \nabla_{A_\bk}^k(s - s_0) = O(r^{-k+\alpha});
  \end{equation*}
  that is: $H_\bk e^s = e^{s/2}_*H_\bk$ is in the $\sG^\C$--orbit of $H_\bk$.
\end{prop}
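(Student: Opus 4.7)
The plan is a tangent / blow-up analysis of $s$ at the puncture, in the spirit of \cite{Jacob2016a, Chen2017}. After lifting the equation to dimension four via \autoref{Prop_LiftToDimensionFour}, the condition $\fm(H_\bk e^s) = 0$ becomes the trace-free Hermitian--Einstein equation $i\Lambda(F_{A_{\bK}} + [\bvarphi \wedge \bvarphi^{*,\bK}]) = 0$, a standard elliptic PDE for $s$. Boundedness of $s$ and interior elliptic regularity then give $s \in C^\infty(\dot B)$.

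Next I rescale at the puncture. Let $\sigma_\lambda(x) := \lambda x$ for $\lambda \in (0,1)$ and set $s_\lambda := s \circ \sigma_\lambda$. The model data $(A_\bk, \xi_\bk)$ transforms covariantly ($A_\bk$ is scale invariant and $\xi_\bk$ has weight $-1$), the Higgs piece $\varphi$ picks up a favorable extra factor of $\lambda$ from the $T^*\Sigma$ scaling, and by \autoref{Def_ParametrizedHeckeModification} the pulled back data $\sigma_\lambda^*(\delbar, \varphi, \fd_y)$ converges as $\lambda \to 0$ in $C^k_{\mathrm{loc}}(\R^3 \setminus \set{0})$ to the exact model $(\delbar_\bk, 0, \fd_{y,\bk})$ with error $O(\lambda^\alpha)$ on any fixed annulus. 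The $L^\infty$ bound on $s$ together with Simpson-style interior estimates \cite[\S 6]{Simpson1988} gives uniform $C^k$ bounds on the family $\{s_\lambda\}$, so Arzel\`a--Ascoli extracts a subsequential $C^\infty_{\mathrm{loc}}$-limit $s_\infty$ on $\R^3 \setminus \set{0}$ that solves the pure model equation.

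I would then classify scale invariant bounded solutions of the model equation; a diagonal argument additionally arranges that $s_\infty$ is invariant under every $\sigma_\mu$, $\mu > 0$. Applying \autoref{Prop_DeltaLogTrExpS} to $s_\infty$, to $-s_\infty$, and to their spectral polynomials forces all eigenvalues of $s_\infty$ to be constant, and a separation of variables analysis on the link $S^2$ of the cone---using the explicit spectrum of the angular operator coupled to the Dirac monopole $A_\bk$ and the singular endomorphism $\xi_\bk$---forces the spectral projections to be parallel and to commute with $\xi_\bk$. Hence $\nabla_{A_\bk} s_\infty = 0$ and $[\xi_\bk, s_\infty] = 0$, so I take $s_0 := s_\infty$.

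Finally I upgrade convergence $s_\lambda \to s_0$ to polynomial decay. Since the $L^2(S^2)$-kernel of the linearization of the model equation at $s_0$ on the link consists precisely of parallel $\xi_\bk$-commuting sections, there is a positive spectral gap $\mu > 0$ to the next eigenvalue. A three annulus iteration in the style of \cite[\S 3]{Jacob2016a} converts this gap into the decay estimate
\[
  \sup_{\partial B_r} \abs{s - s_0} \lesssim r^\alpha, \quad \alpha := \tfrac12\bigl(-1 + \sqrt{1+4\mu}\bigr) > 0,
\]
which in particular proves uniqueness of the tangent. Rescaled interior Schauder estimates on dyadic annuli $B_{2r} \setminus B_{r/2}$ then yield $\abs{\nabla_{A_\bk}^k(s - s_0)} = O(r^{-k+\alpha})$ for every $k \in \N_0$. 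I expect the main obstacle to be the classification step: explicitly identifying the kernel of the angular model operator requires careful book-keeping of the interaction between the Dirac monopole holonomy, the Higgs direction $\fd_y - i\xi_\bk$, and the singular endomorphism $\xi_\bk$.
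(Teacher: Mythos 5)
Your strategy (blow up at the puncture, classify the tangent object, then upgrade via a spectral gap and a three-annulus iteration) is genuinely different from the paper's, which never takes a rescaled limit: it proves the a priori Morrey estimate $\int_{B_r}\abs{\fV s}^2 \lesssim r^{1+2\alpha}$ of \autoref{Prop_AprioriMorreyEstimate} directly, using the Neumann--Poincar\'e inequality of \autoref{Prop_NeumannPoincare}, the differential inequality of \autoref{Prop_VHatSR} for $\hat s_r = \log(e^s e^{-\Pi_r s})$ derived from Simpson's identity \autoref{Prop_(KHes-KH,s)}, and a hole-filling iteration in $g(r)=\int_{B_r}\abs{x}^{-1}\abs{\fV s}^2$; existence of $s_0$ and the decay rate then drop out together from Bando--Siu interior estimates on dyadic annuli. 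As written, your route has two genuine gaps. First, dilation invariance of the blow-up limit: a diagonal argument only produces subsequential $C^\infty_{\mathrm{loc}}$ limits on $\R^3\setminus\set{0}$; it does not force $s_\infty\circ\sigma_\mu = s_\infty$. For that one needs a monotone scale-comparing quantity (or an a priori decay estimate, which is exactly what is being proved), and none is identified: in the tangent-cone papers you cite the Yang--Mills energy monotonicity for the connection plays this role, but there is no analogous monotonicity formula on the table for the metric deformation $s$ relative to the fixed singular model. Your classification of $s_\infty$ (constant eigenvalues, parallel $\xi_\bk$-commuting projections) leans on this unproven homogeneity, so the identification of $s_0$ is not yet justified.

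Second, the neutral modes. The kernel of the linearized operator on the link is precisely $N = \ker\fV_1$, the parallel sections commuting with $\xi_\bk$ --- rate-zero modes. A three-annulus argument with a spectral gap controls only the component orthogonal to $N$ at each scale; to conclude $\abs{s-s_0}\lesssim r^\alpha$ for one fixed $s_0$ you must additionally show that the $N$-component does not drift as you descend through the scales (this is the ``uniqueness of the tangent'' you assert rather than prove), and you must justify the linearization itself: $s-s_0$ is known to be small only along the blow-up subsequence, so the quadratic terms are not a priori perturbative at all scales and the smallness has to be propagated, which again runs into the drift of the $N$-component. Controlling exactly this drift is what the paper's scale-dependent projection $\Pi_r s$, the modified section $\hat s_r$, and the resulting Morrey decay are designed to do. Without an analogous mechanism, your final estimate $\nabla_{A_\bk}^k(s-s_0)=O(r^{-k+\alpha})$, and hence the conclusion that $H_\bk e^s$ lies in the $\sG^\C$--orbit of $H_\bk$, does not follow. (A minor point: the uniform $C^k$ bounds on the rescalings should be attributed to Bando--Siu type interior estimates, as in the paper, rather than to Simpson's Section 6.)
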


The proof of this result uses the technique developed in \cite{Jacob2016a}.
Henceforth,
we shall assume the situation of \autoref{Prop_SingularityAnalysis}.
Moreover, we drop the subscript $\bk$ from $E_\bk$ and $H_\bk$ to simplify notation.

Define $\fV \co \Gamma(i\fu(E,H)) \to \Omega^1(\dot B,i\fu(E,H))\times \Gamma(i\fu(E,H))$ by
\begin{equation*}
  \fV s \coloneq (\nabla_{A_\bk}s,[\xi_\bk,s])
\end{equation*}
The following a priori Morrey estimate is the crucial ingredient of the proof of \autoref{Prop_SingularityAnalysis}. 

\begin{prop}
  \label{Prop_AprioriMorreyEstimate}
  For some $\alpha > 0$,
  we have
  \begin{equation*}
    \int_{B_r} \abs{\fV s}^2 \lesssim r^{1+2\alpha}.
  \end{equation*}
\end{prop}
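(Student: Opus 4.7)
The strategy follows the frequency-function approach of \citet{Jacob2016a}, adapted to the three-dimensional Dirac-monopole setting. The equation $\fm(H_\bk e^s) = 0$, combined with the fact that $(\delbar,\varphi,\fd_y)$ differs from the Dirac monopole data by $O(r^{-1+\alpha})$ and that $g = g_0 + O(r^2)$, yields (via the computation in the proof of \autoref{Prop_DeltaLogTrExpS}) a semilinear elliptic equation for $s$ whose leading part is $\Delta_{A_\bk} s + \ad_{\xi_\bk}^2 s + (\text{quasilinear errors})$. The Dirichlet energy associated with this operator is precisely $\int|\fV s|^2$, so \autoref{Prop_AprioriMorreyEstimate} is the statement that $s$ approaches $\ker\fV$ at a definite polynomial rate at the origin.

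I would proceed in three steps. First, derive from the equation for $s$ a Bochner-type inequality of the form
\begin{equation*}
  \Delta |s|^2 + 2|\fV s|^2 \leq C\bigl(r^{-1}|s||\fV s| + r^{-2+\alpha}|s|^2\bigr)
\end{equation*}
on $\dot B$, using boundedness of $s$ to control the quadratic nonlinearity. Second, introduce the Almgren-type frequency
\begin{equation*}
  N(\rho) \coloneq \frac{\rho\int_{B_\rho}|\fV s|^2}{\int_{\partial B_\rho}|s|^2}
\end{equation*}
and show that $\rho \mapsto N(\rho) + C\rho^\beta$ is nondecreasing for some $\beta > 0$, by differentiating $\log H$ and $\log D$ (with $H$, $D$ denoting respectively the denominator and the $\rho$-free numerator of $N$) and integrating by parts using the equation. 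Third, identify the limit $N(0^+) \geq 2\alpha > 0$ via blow-up: any subsequential $C^{1,\gamma}_{\loc}$-limit of the rescalings $s_\lambda(x) \coloneq s(\lambda x)$ on $\R^3\setminus\set{0}$ is bounded, homogeneous, and $\fV$-energy-minimizing on the exact Dirac background; a separation-of-variables argument on $S^2$, using the block decomposition with respect to $\ad_{\xi_\bk}$ (whose non-zero eigenvalues are $\pm i(k_i-k_j)/(2r)$) together with the spectrum of the spherical Laplacian, then forces the smallest possible positive homogeneity degree of any such non-trivial tangent section to be some explicit $\alpha > 0$. Combined with the almost-monotonicity this gives $\int_{\partial B_\rho}|s|^2 \lesssim \rho^{2+2\alpha}$, and a standard Caccioppoli inequality (pair the equation for $s$ with $\chi^2 s$ for a radial cut-off $\chi$) upgrades this to the desired $\int_{B_\rho}|\fV s|^2 \lesssim \rho^{1+2\alpha}$.

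\textbf{Main obstacle.} The delicate step is the almost-monotonicity of $N$ in the second step above. Both the quasilinear nonlinearity (contributing a critical $r^{-2}|s|^2$ term) and the failure of the background data to be exactly Dirac-type produce error terms that are of borderline order for the monotonicity formula. The slack needed to absorb them should come from Hardy's inequality together with the discrete spectrum of $\ad_{\xi_\bk}$ on $i\fu(E_\bk,H_\bk)$, which controls the off-diagonal blocks of $s$ relative to $\ker\fV$; a secondary issue is checking that the rescaled limits $s_\infty$ inherit enough regularity to make the spherical spectral analysis rigorous.
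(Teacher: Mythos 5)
The central gap is in your third step. The kernel of $\fV$ is nontrivial --- it consists of the $A_\bk$--parallel endomorphisms commuting with $\xi_\bk$ --- and the conclusion this proposition feeds into (\autoref{Prop_SingularityAnalysis}) only asserts $s - s_0 = O(r^\alpha)$ for some $s_0 \in \ker\fV$ which in general is nonzero; $s$ itself need not decay at the puncture. With your frequency $N(\rho) = \rho\int_{B_\rho}\abs{\fV s}^2/\int_{\del B_\rho}\abs{s}^2$, in the typical case $s \to s_0 \neq 0$ one has $\int_{\del B_\rho}\abs{s}^2 \sim \rho^2$, the blow-up limits are the degree--zero constants in $\ker\fV$, and hence $N(0^+) = 0$: your claimed lower bound $N(0^+)\geq 2\alpha > 0$, and with it the decay $\int_{\del B_\rho}\abs{s}^2 \lesssim \rho^{2+2\alpha}$, are simply false. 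To salvage a frequency scheme you would have to measure the distance of $s$ to $\ker\fV$ scale by scale (replace $s$ by $s - \Pi_\rho s$ with $\Pi_\rho$ a projection onto $\ker\fV$ over the sphere or annulus), and then the monotonicity computation has to cope with the $\rho$--dependence of that projection --- a point your plan does not address.

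Relatedly, the almost--monotonicity you defer as the ``main obstacle'' is the actual content of the argument, and the slack you invoke (Hardy plus the spectrum of $\ad_{\xi_\bk}$) does not obviously absorb the critical $r^{-2}\abs{s}^2$ terms produced by the quasilinearity, because $\abs{s}$ is only bounded, not small; nor is it clear in what sense blow-up limits are ``$\fV$--energy minimizing''. The paper's proof avoids both monotonicity and blow-up: it applies the exact identity of \autoref{Prop_(KHes-KH,s)} not to $s$ but to $\hat s_r = \log(e^s e^{-\Pi_r s})$, which yields the differential inequality $\abs{\fV\hat s_r}^2 \lesssim r^{-2+\beta} - \Delta\abs{\hat s_r}^2$ of \autoref{Prop_VHatSR} with a subcritical error; this is combined with the Neumann--Poincar\'e inequality on dyadic annuli for $s - \Pi_r s$ (\autoref{Prop_NeumannPoincare}) and a Green's-function/hole-filling argument to get $g(r) \leq \gamma g(2r) + c r^\beta$ for $g(r) = \int_{B_r}\abs{x}^{-1}\abs{\fV s}^2$, whence the Morrey decay follows by elementary iteration. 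Unless you can establish an almost-monotone frequency for $s - \Pi_\rho s$ that handles both the scale-dependent projection and the critical nonlinear terms, the proposal does not close, and the dyadic-iteration route is the one that actually works here.
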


\begin{proof}[Proof of \autoref{Prop_SingularityAnalysis} assuming \autoref{Prop_AprioriMorreyEstimate}]
  Denote by $s_r$ the pullback of $s$ from $B_r$ to $B$.
  By \autoref{Prop_AprioriMorreyEstimate},
  \begin{equation*}
    \Abs{\nabla_{A_\bk} s_r}_{L^2(B)} + \Abs{[\xi_\bk,s_r]}_{L^2(B)} \lesssim r^\alpha.
  \end{equation*}
  Denote by $\fm_r$ the map $\fm$ with respect to $r^{-2}$ times the pullback of the Riemannian metric and the parametrized Hecke modification from $B_r$ to $B$.
  The equation $\fm_r(H e^{s_r}) = 0$ can be written schematically as
  \begin{equation*}
    \nabla_{A_H}^*\nabla_{A_H}s_r + B(\nabla_{A_H}s\otimes \nabla_{A_H}s_r) = C(\fm_r(H))
  \end{equation*}
  where $B$ and $C$ are linear with coefficients depending only on $s$, but not its derivatives.

  Set
  \begin{equation*}
    a \coloneq \nabla_{A_H} - \nabla_{A_\bk}, \quad
    \hat\phi = \phi_H - \phi_\bk, \qandq
    \hat \xi \coloneq \xi_H - \xi_\bk.
  \end{equation*}
  It follows from \autoref{Eq_ParametrizedHeckeModification_Estimates} that, after possibly decreasing the value of $\alpha > 0$, for $k \in \N_0$
  \begin{equation}
    \label{Eq_AHatXi}
    \nabla_{A_\bk}^k a = O(r^{-k-1+\alpha}), \quad
    \nabla_{A_\bk}^k\hat\phi = O(r^{-k}), \qandq
    \nabla_{A_\bk}^k\hat\xi = O(r^{-k-1+\alpha}).
  \end{equation}
  Therefore, $\fm_r(H) = O(r^\alpha)$ on $B \setminus B_{1/8}$.
  
  As in \cite[Section 5]{Jacob2016a},
  it follows from Bando--Siu's interior estimates \cites[Proposition 1]{Bando1994}[Theorem C.1]{Jacob2016} that for $k \in \N_0$
  \begin{equation*}
    \Abs{\nabla_{A_\bk} s_r}_{C^k(B_{1/2}\setminus B_{1/4})} + \Abs{[\xi_\bk,s_r]}_{C^k(B_{1/2}\setminus B_{1/4})} \lesssim_k r^\alpha.
  \end{equation*}
  Consequently,
  there is an $s_0 \in \ker \fV$ such that for $k \in \N_0$
  \begin{equation*}
    \Abs{\nabla_{A_\bk}^k(s_r - s_0)}_{L^\infty(B_{1/2}\setminus B_{1/4})} \lesssim_k r^\alpha.
  \end{equation*}
  This translates to the asserted estimates for $s$.
\end{proof}

The proof of \autoref{Prop_AprioriMorreyEstimate} occupies the remainder of this section.

\subsection{A Neumann--Poincaré inequality}

Denoting the radial coordinate by $r$,
we can write
\begin{equation*}
  \fV s \coloneq (\rd r\cdot \nabla_{\del_r}s,\fV_rs)
\end{equation*}
for a family of operators $\fV_{r} \co \Gamma(\del B_r,i\fu(E,H)) \to \Omega^1(\del B_r,i\fu(E,H))\times \Gamma(\del B_r,i\fu(E,H))$.
The pullback of $\fV_r$ to $\del B$ agrees with $\fV_1$.
Consequently, we can identify
\begin{equation*}
  \ker \fV_r = \ker \fV_1 \eqcolon N.
\end{equation*}
Denote by $\pi_r \co \Gamma(\del B_r,i\fu(E,H)) \to N$ the $L^2$--orthogonal projection onto $N$.
Set
\begin{equation*}
  \Pi_r s \coloneq \frac1r\int_r^{2r} \pi_t(s) \,\rd t.
\end{equation*}

\begin{prop}
  \label{Prop_NeumannPoincare}
  For every $s \in \Gamma(i\fu(E,H))$ and $r \in [0,1/2]$,
  we have
  \begin{equation}
    \label{Eq_NeumannPoincare}
    \int_{B_{2r}\setminus B_r} \abs{s-\Pi_r s}^2
    \lesssim
    r^2\int_{B_{2r}\setminus B_r} \abs{\fV s}^2.
  \end{equation}
\end{prop}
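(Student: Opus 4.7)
The plan is to reduce \eqref{Eq_NeumannPoincare} to a Poincaré inequality on the fixed annulus $A \coloneq B_2 \setminus B_1$ by scaling, and to establish the latter by a standard compactness argument. Under the dilation $\phi_r\co A \to B_{2r} \setminus B_r$, $x \mapsto rx$, the connection $A_\bk$ is scale-invariant (it is pulled back from $S^2$, so $\pi\circ\phi_r = \pi$) and $\xi_\bk(rx) = r^{-1}\xi_\bk(x)$; writing $\tilde s \coloneq \phi_r^* s$, a direct calculation gives
\begin{equation*}
  \Abs{s}_{L^2(B_{2r}\setminus B_r)}^2 = r^3\Abs{\tilde s}_{L^2(A)}^2
  \qandq
  \Abs{\fV s}_{L^2(B_{2r}\setminus B_r)}^2 = r\Abs{\fV \tilde s}_{L^2(A)}^2.
\end{equation*}
Because $\ker \fV_t$ is canonically identified with $N$ for every $t$, one also checks $\Pi_r s = \Pi_1 \tilde s$ (as elements of $N$), whence $\phi_r^*(s - \Pi_r s) = \tilde s - \Pi_1 \tilde s$. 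A Poincaré inequality with constant $C$ on $A$ therefore yields the corresponding inequality on $B_{2r} \setminus B_r$ with constant $Cr^2$, and \eqref{Eq_NeumannPoincare} follows upon using $r \leq 1/2$.

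For the Poincaré inequality on $A$, I would argue by contradiction: if it fails, there is a sequence $(s_n)$ with $\sigma_n \coloneq s_n - \Pi_1 s_n$ satisfying $\Abs{\sigma_n}_{L^2(A)} = 1$ and $\Abs{\fV s_n}_{L^2(A)} \to 0$. The radial extension of every element of $N$ lies in $\ker \fV$ on $A$---this uses $\nabla_{A_\bk,\del_r} = \del_r$ to promote an angular kernel element to a full kernel element---so $\Pi_1 s_n \in \ker \fV$ and $\fV\sigma_n = \fV s_n \to 0$. Hence $(\sigma_n)$ is bounded in the $A_\bk$-Sobolev space $H^1_{A_\bk}(A, i\fu(E,H))$, which is well-defined because $A_\bk$ is smooth on the compact set $\bar A$. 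Rellich--Kondrachov yields a subsequence converging in $L^2(A)$ to some $\sigma$ with $\Abs{\sigma}_{L^2(A)} = 1$, $\nabla_{A_\bk}\sigma = 0$, and $[\xi_\bk,\sigma] = 0$. These conditions force $\sigma$ to be the radial extension of an element of $N$, so $\Pi_1 \sigma = \sigma$.

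On the other hand, $\Pi_1|_N = \id$ implies $\Pi_1 \sigma_n = 0$ for every $n$, and the map $\Pi_1\co L^2(A) \to N$ is $L^2$-continuous because it is defined by integrating a bounded fibrewise projection onto $N$. Passing to the limit gives $\Pi_1 \sigma = 0$, contradicting $\sigma \neq 0$. The main conceptual obstacle is the scaling identification $\Pi_r s = \Pi_1 \tilde s$, which requires checking that the canonical identification $\ker \fV_r \cong N$ is compatible with the averaging in $t \in [r,2r]$; once this bookkeeping is in place, the compactness argument on the fixed annulus is routine.
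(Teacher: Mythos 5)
Your proposal is correct, but it proves the key inequality by a different mechanism than the paper. Both arguments begin with the same reduction: the model data are dilation-compatible ($A_\bk$ is pulled back from $S^2$, $\xi_\bk$ scales like $r^{-1}$), so the estimate on $B_{2r}\setminus B_r$ follows from one on a fixed annulus, and your bookkeeping $\Pi_r s = \Pi_1\tilde s$ and the constant $Cr^2 \leq C$ is the careful version of the paper's one-line appeal to scale invariance (the two sides in fact scale differently, but in the favorable direction, exactly as you note). The difference is in how the fixed-annulus inequality is obtained. The paper works on the cylinder $[1/2,1]\times\del B$ and argues directly: it splits $\abs{s - \Pi s}^2$ into $\abs{s(t,\cdot)-\pi s(t,\cdot)}^2$, controlled by the cross-sectional spectral-gap estimate $\int_{\del B}\abs{s-\pi s}^2 \lesssim \int_{\del B}\abs{\fV_1 s}^2$, plus $\abs{\pi s(t,\cdot)-\pi s(u,\cdot)}^2$, controlled by the fundamental theorem of calculus in the radial variable. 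You instead run a single Rellich compactness/contradiction argument on the annulus, and the points that make it work are precisely the ones you verify: $N$ is finite-dimensional, radial extensions of $N$ exhaust $\ker\fV$ on the annulus (using that $\nabla_{A_\bk}$ has no $\rd r$--component), $\Pi_1$ restricts to the identity on such extensions so that $\Pi_1\sigma_n = 0$, and $\Pi_1$ is $L^2$--continuous. The trade-off: the paper's direct computation keeps the constant explicit and isolates the only nontrivial input as the cross-sectional estimate on $\del B$ (itself a standard spectral/compactness fact), whereas your argument is softer and handles the angular and radial directions in one stroke at the cost of a non-explicit constant; for the application (the Morrey estimate) only uniformity of the constant in $r$ matters, which both give. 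One small point to make explicit if you write this up: the compactness argument should be run for the pulled-back metrics $r^{-2}\phi_r^*g$, which are uniformly comparable to $g_0$ on the annulus for $r \leq 1/2$, so the constant is uniform in this perturbation.
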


\begin{proof}
  The proof is identical to that of \cite[Proposition 4.2]{Jacob2016a}.
  For the readers convenience we will reproduce the argument here.

  Since \autoref{Eq_NeumannPoincare} is scale invariant,
  we may assume $r=1/2$.
  Furthermore, it suffices to prove the cylindrical estimate
  \begin{equation*}
    \int_{1/2}^1 \int_{\del B} \abs{s(t,\hat x) - \Pi s(t,\cdot)}^2 \,\rd \hat x \rd t
    \lesssim
    \int_{1/2}^1 \int_{\del B} \abs{\del_t s(t,\hat x)}^2 + \abs{\fV_1 s(t,\hat x)}^2 \,\rd \hat x \rd t
  \end{equation*}
  with $s$ denoting a section over $[1/2,1]\times \del B$,
  \begin{equation*}
    \pi \coloneq \pi_1, \qandq
    \Pi s \coloneq 2\int_{1/2}^1 \pi s(t,\cdot) \, \rd t.
  \end{equation*}

  To prove this inequality,
  we compute
  \begin{align*}
    &\int_{1/2}^1 \int_{\del B} \abs{s(t,\hat x) - \Pi s(t,\cdot)}^2 \rd \hat x \rd t\\
    &\quad=
      4\int_{1/2}^1 \int_{\del B} \abs*{\int_{1/2}^1 s(t,\hat x) -  \pi s(u,\cdot) \,\rd u}^2 \,\rd \hat x \rd t \\
    &\quad\lesssim
      \int_{1/2}^1\int_{1/2}^1\int_{\del B}  \abs*{s(t,\hat x) -  \pi s(u,\cdot)}^2 \,\rd \hat x\rd u\rd t \\
    &\quad\lesssim
      \int_{1/2}^1\int_{1/2}^1\int_{\del B} \abs*{s(t,\hat x) -  \pi s(t,\cdot)}^2 + \abs*{\pi s(t,\cdot) -  \pi s(u,\cdot)}^2 \,\rd \hat x\rd u\rd t.
  \end{align*}
  The first summand can be bounded as follows
  \begin{align*}
    \int_{1/2}^1\int_{1/2}^1\int_{\del B} \abs*{s(t,\hat x) -  \pi s(t,\cdot)}^2 \,\rd\hat x\rd t\rd u
    &\lesssim
      \int_{1/2}^1\int_{1/2}^1\int_{\del B} \abs*{\fV_1 s(t,\hat x)}^2 \,\rd\hat x\rd t\rd u \\ 
    &\lesssim
      \int_{1/2}^1 \int_{\del B} \abs*{\fV_1 s(t,\hat x)}^2 \,\rd\hat x\rd t.
  \end{align*}
  The second summand can be controlled as in the usual proof of the Neumann--Poincare inequality:
  We have
  \begin{align*}
    \abs*{\pi s(t,\cdot) -  \pi s(u,\cdot)}
    &=
      \abs*{\int_0^1 \del_v \pi s(t + v(t-u),\cdot) \,\rd v} \\
    &\leq
      \abs*{\int_0^1  \pi (\del_t s)(t + v(t-u),\cdot) \,\rd v} \\
    &\lesssim
      \(\int_0^1 \int_{\del B} \abs{(\del_t s)(t + v(t-u),\hat x)}^2 \,\rd\hat x\rd v\)^{1/2}.
  \end{align*}
  Plugging this into the second summand and symmetry considerations yield
  \begin{align*}
    &\int_{1/2}^1\int_{1/2}^1\int_{\del B} \abs*{\pi s(t,\cdot) -  \pi s(u,\cdot)}^2 \,\rd \hat x\rd u\rd t \\
    &\quad\lesssim
      \int_{1/2}^1 \int_{1/2}^1 \int_0^1 \int_{\del B} \abs{(\del_t s)(t + v(t-u),\hat x)}^2 \,\rd\hat x\rd v\rd u\rd t \\
    &\quad\lesssim
      \int_{1/2}^1 \int_{\del B} \abs{\del_t s(t,\hat x)}^2 \,\rd \hat x\rd t.
  \end{align*}
  This finishes the proof.
\end{proof}

\subsection{A differential inequality}

The following differential inequality for
\begin{equation*}
  \hat s_r \coloneq \log\paren[\big]{e^{-\Pi_r s}e^s}.
\end{equation*}
lies at the heart of the proof of \autoref{Prop_AprioriMorreyEstimate}.
By construction,
the section $\hat s_r$ is self-adjoint with respect to $He^s$ as well as $He^{\Pi_r s}$,
and
\begin{equation*}
  H e^s = \paren[\big]{H e^{\Pi_r s}}e^{\hat s_r}.
\end{equation*}

\begin{prop}
  \label{Prop_VHatSR}
  The section $\hat s_r$ satisfies
  \begin{equation*}
    \abs{\fV s}
    \lesssim
      \abs{\fV \hat s_r}, \quad
    \abs{\hat s_r}
    \lesssim
      \abs{s - \Pi_rs}, \qandq
    \abs{\fV \hat s_r}^2
    \lesssim
      r^{-2+\beta} - \Delta\abs{\hat s_r}^2
  \end{equation*}
  for some $\beta > 0$.
\end{prop}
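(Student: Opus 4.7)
The plan is to prove the three inequalities in order. The first two are pointwise algebraic consequences of the identity $e^s = e^{\hat s_r} e^{\Pi_r s}$, while the third is a Simpson--Donaldson Bochner-type inequality exploiting the hypothesis $\fm(H_\bk e^s) = 0$.

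For the first inequality, the key point is that $\Pi_r s \in N$, so $\nabla_{A_\bk} e^{\Pi_r s} = 0$ and $[\xi_\bk, e^{\Pi_r s}] = 0$. Differentiating the identity above therefore yields
\[
  \nabla_{A_\bk} e^s = \bigl(\nabla_{A_\bk} e^{\hat s_r}\bigr) \, e^{\Pi_r s}, \qquad [\xi_\bk, e^s] = [\xi_\bk, e^{\hat s_r}] \, e^{\Pi_r s}.
\]
Since $s$, $\Pi_r s$, and hence $\hat s_r$ are uniformly bounded, the linearization of $\exp$ is invertible with bounded inverse on the relevant compact region, and the identities above give $\abs{\fV s} \lesssim \abs{\fV \hat s_r}$. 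For the second inequality, the map $(u,v) \mapsto \log(e^u e^{-v})$ is smooth on the bounded range of $(s,\Pi_r s)$ and vanishes on the diagonal $u = v$, so the mean value theorem yields $\abs{\hat s_r} \lesssim \abs{s - \Pi_r s}$.

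For the third inequality, I would first lift to four dimensions via \autoref{Prop_LiftToDimensionFour}, recasting $\fm(H_\bk e^s) = 0$ as the Hermitian--Einstein equation for the Higgs bundle $(\bE, \bdelbar, \bvarphi)$, and then apply Simpson's pointwise identities \cite[Lemma 3.1]{Simpson1988}. For any two Hermitian metrics related by $K' = K e^\tau$, these deliver a Bochner-type bound of the schematic shape $\Delta \abs{\tau}^2 + c\abs{\fV \tau}^2 \lesssim \abs{\tau} \cdot \abs{\fm(K)}$ whenever $\fm(K') = 0$. Applying this with reference metric $K = H_\bk e^{\Pi_r s}$ (so that $\tau = \hat s_r$) reduces the task to estimating $\abs{\fm(H_\bk e^{\Pi_r s})}$; but since $\Pi_r s$ is $\nabla_{A_\bk}$-parallel and commutes with $\xi_\bk$, conjugation by $e^{\Pi_r s}$ preserves the entire Dirac background, and so this error essentially reduces to $\abs{\fm(H_\bk)}$. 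The Dirac monopole itself satisfies the Bogomolny equation, so the Dirac model contributes nothing, and the perturbation estimates \autoref{Eq_AHatXi} then yield $\abs{\fm(H_\bk)} \lesssim r^{-2+\alpha}$, giving the claim with $\beta = \alpha$ after absorbing the bounded factor $\abs{\hat s_r}$ into the implicit constant.

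The main obstacle is the bookkeeping required to transfer Simpson's identity, normally stated for the canonical pair $(H, He^s)$, to the shifted pair $(H_\bk e^{\Pi_r s}, H_\bk e^s)$ in a non-commutative setting. Here the centrality of $\Pi_r s$ with respect to the Dirac data $(A_\bk, \xi_\bk)$ is essential: any commutator errors that would otherwise appear in the Bochner computation involve brackets $[\Pi_r s,\,\cdot\,]$ with elements of the Dirac background, and these vanish identically.
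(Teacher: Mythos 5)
Your overall strategy is the same as the paper's: the first two estimates are elementary exponential-calculus facts, and the third comes from the identity behind \autoref{Prop_(KHes-KH,s)} (Simpson's Lemma 3.1 via the four-dimensional lift), combined with $\Pi_r s \in \ker\fV$ and the decay estimates \autoref{Eq_AHatXi}. The only structural difference is which pair of metrics you feed into the identity: you take the reference metric $H_\bk e^{\Pi_r s}$ so that the varying metric $H_\bk e^{s}$ is an exact zero of $\fm$, and you must then bound $\fm(H_\bk e^{\Pi_r s})$; the paper instead keeps the reference metric $H_\bk$, applies the identity to $H_\bk e^{\hat s_r}$, and uses $\fm(H_\bk e^{s})=0$ to write $\fm(H_\bk e^{\hat s_r})=i\Lambda\paren[\big]{De^{\Pi_r s}\wedge \bar D_{H_\bk e^{s}}e^{-\Pi_r s}}$, which costs a cross term $r^{-1+\beta}\abs{\nabla_{A_H}\hat s_r}$ that is then absorbed. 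Your variant avoids the absorption step at the price of needing second derivatives of $\Pi_r s$ measured against the actual data, which \autoref{Eq_AHatXi} does supply; either bookkeeping works. (Minor point: with your reference metric the endomorphism relating the two metrics is $\log(e^{-\Pi_r s}e^{s})$, the conjugate of $\hat s_r$ by $e^{\Pi_r s}$, not $\hat s_r$ itself; since $e^{\Pi_r s}$ is $\fV$-parallel and uniformly bounded with bounded inverse this changes nothing, but it should be said.)

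One justification as written is not correct, although the conclusion you want is: the commutator errors in $\fm(H_\bk e^{\Pi_r s})-\fm(H_\bk)$ do \emph{not} vanish identically. The quantity $\fm$ is built from the Chern connection of the actual $\delbar$ with respect to the metric, the actual Higgs field $\varphi$, and $\xi$ coming from the actual $\fd_y$; relative to these, $\Pi_r s$ is neither parallel nor central---it is only so for the Dirac model $(A_\bk,\xi_\bk)$. What is true is that $\nabla_{A_H}\Pi_r s=[a,\Pi_r s]=O(r^{-1+\beta})$, $[\xi_H,\Pi_r s]=[\hat\xi,\Pi_r s]=O(r^{-1+\beta})$, and the brackets with $\varphi$ are merely $O(1)\leq O(r^{-1+\beta})$, so after one more application of \autoref{Eq_AHatXi} every term in $\fm(H_\bk e^{\Pi_r s})-\fm(H_\bk)$ is $O(r^{-2+\beta})$; together with $\abs{\fm(H_\bk)}=O(r^{-2+\beta})$ this gives the bound you need. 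So replace ``these vanish identically'' by this estimate---it is exactly the computation the paper carries out for its own error term---and your argument goes through.
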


The proof relies on the following identity.

\begin{prop}
  \label{Prop_(KHes-KH,s)}
  We have
  \begin{equation*}
    \inner{\fm(He^s)-\fm(H)}{s}
    = \frac14\Delta\abs{s}^2
    + \frac12\abs{\upsilon(-s)\nabla_{A_H} s}^2
    + \frac12\abs{\upsilon(-s)[\phi_H,s]}^2
    + \frac12\abs{\upsilon(-s)[\xi_H,s]}^2
  \end{equation*}
  with
  \begin{equation*}
    \upsilon(s) = \sqrt{\frac{e^{\ad_s}-\id}{\ad_s}} \in \End(\gl(E)).
  \end{equation*}
\end{prop}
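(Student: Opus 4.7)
The identity is a pointwise Donaldson--Simpson formula for the first variation of the Hermite--Einstein moment map under a conformal change $H \mapsto He^s$, adapted to the extended Bogomolny setting. My plan is to lift to the standard Kähler Higgs-bundle setting via \autoref{Prop_LiftToDimensionFour}, invoke the standard identity there, and then decompose the resulting 4D formula back to three dimensions.

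By \autoref{Prop_LiftToDimensionFour}\autoref{Prop_LiftToDimensionFour_Metric}, pulling back to $X = S^1\times M$ identifies $\fm(K)$ with the Kähler Higgs moment map $\bfm(\bK) \coloneq i\Lambda(F_{A_\bK}+[\bvarphi\wedge\bvarphi^{*,\bK}])$ on the Higgs bundle $(\bE,\bdelbar,\bvarphi)$. Since $s$ is pulled back from $M$, both $s$ and $\bH e^s$ are $S^1$-invariant. The desired identity is therefore equivalent to its 4D counterpart
\begin{equation*}
\inner{\bfm(\bH e^s)-\bfm(\bH)}{s}_\bH = \tfrac14\Delta_X|s|^2 + \tfrac12|\upsilon(-s)\nabla_{A_\bH}s|^2 + \tfrac12|\upsilon(-s)[\bvarphi,s]|^2 + \tfrac12|\upsilon(-s)[\bvarphi^{*,\bH},s]|^2,
\end{equation*}
which is the standard Donaldson--Simpson identity on a Kähler manifold. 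It is obtained by differentiating the Chern connection of $\bH e^{ts}$ in $t\in[0,1]$, integrating against $s$, and applying the Kähler identities together with $\bvarphi\wedge\bvarphi=0$; the non-local factor $\upsilon(-s) = \sqrt{(e^{\ad_s}-\id)/\ad_s}$ encodes the non-linear dependence of the Chern connection on the conformal factor $e^s$, cf.\ \cite[Lemma 19]{Donaldson1985} in the holomorphic case and \cite{Simpson1988} for the extension to Higgs bundles.

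To recover the 3D formula, decompose the 4D terms. From the proof of \autoref{Prop_LiftToDimensionFour_Metric}, $\nabla_{A_\bH,\del_\alpha}s = [\xi_H,s]$ for $S^1$-invariant $s$, so $|\nabla_{A_\bH}s|^2 = |\nabla_{A_H}s|^2 + |[\xi_H,s]|^2$; since $\upsilon(-s)$ acts fibrewise on the $\End(\bE)$-factor via $\ad_s$, it preserves this orthogonal splitting. Next, $\bvarphi = \varphi = \tfrac12(\phi_H-iI\phi_H)$ is of type $(1,0)$ while $\bvarphi^{*,\bH}$ is of type $(0,1)$, so the two summands in $[\phi_H,s] = [\varphi,s] - [\varphi^{*,H},s]$ are pointwise orthogonal as differential forms, yielding $|[\bvarphi,s]|^2 + |[\bvarphi^{*,\bH},s]|^2 = |[\phi_H,s]|^2$; once more $\upsilon(-s)$ respects this type splitting. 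Finally $\Delta_X|s|^2 = \Delta_M|s|^2$ by $S^1$-invariance. Substituting produces the claimed identity.

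The main obstacle is the 4D identity in the second paragraph: one must compute the first variation of $\bfm$ along the path $\bH e^{ts}$ and verify that the quadratic remainder assembles exactly into the claimed $\upsilon(-s)$-weighted positive norms, which requires diagonalising $\ad_s$ and tracking the resulting spectral factors. The reduction from 4D to 3D is then routine bookkeeping given the type decomposition of forms and the $S^1$-invariance of $s$.
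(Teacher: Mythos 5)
Your route is, in outline, the paper's own: prove the identity in the four-dimensional Kähler picture of \autoref{Prop_LiftToDimensionFour} and then descend to three dimensions using $S^1$--invariance (so that $\nabla_{A_{\bH},\del_\alpha}s=[\xi_H,s]$) and the type decomposition of $\phi_H=\varphi-\varphi^{*,H}$; that bookkeeping is correct. The problem is that the four-dimensional $\upsilon$--weighted identity \emph{is} the content of this proposition, and you dispose of it by declaring it "the standard Donaldson--Simpson identity", citing \cite[Lemma 19]{Donaldson1985} and \cite{Simpson1988}, while conceding in your closing paragraph that assembling the $\upsilon(-s)$ factors is "the main obstacle". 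Neither reference states the identity in the form you need: Simpson's Lemma 3.1 supplies the trace identity and the $\log\tr e^s$ inequality used in \autoref{Prop_DeltaLogTrExpS}, not this pointwise formula with the Higgs field and (after reduction) the $\xi_H$ term. So the only nontrivial step of the argument is asserted rather than proved, and the derivation you sketch (differentiate the Chern connection along $\bH e^{ts}$, integrate in $t$, diagonalise $\ad_s$) is left unexecuted.

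For comparison, the paper's derivation is a short closed-form computation that avoids both the path $\bH e^{ts}$ and any spectral decomposition of $\ad_s$. Setting $D=\del+i\varphi$ and $\bar D_H=\delbar_H-i\varphi^{*,H}$, one has $\fm(H)=\tfrac12 i\Lambda[D,\bar D_H]$, and the metric dependence enters only through the exact conjugation formula $\bar D_{He^s}=e^{-s}\bar D_H e^{s}=\bar D_H+\Upsilon(-s)\bar D_H s$ with $\Upsilon(s)=(e^{\ad_s}-\id)/\ad_s$. Pairing the resulting expression for $\fm(He^s)-\fm(H)$ with $s$ and integrating by parts via the Kähler identities, together with $\Upsilon(\pm s)s=s$ and the self-adjointness of $\ad_s$ for Hermitian $s$ (so that $\inner{\Upsilon(-s)a}{a}=\abs{\upsilon(-s)a}^2$), produces the $\tfrac14\Delta\abs{s}^2$ term and the $\upsilon(-s)$--weighted squares in one stroke; the splitting into the $\nabla_{A_H}$, $\phi_H$ and $\xi_H$ contributions then follows from type orthogonality and $S^1$--invariance exactly as you describe. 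If you wish to keep your first-variation route, you must carry out that $t$--integration and show the remainder reassembles into the stated squares; as written, your proposal defers precisely the computation that constitutes the proof.
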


\begin{proof}
  We prove the analogous formula in dimension four.
  We have
  \begin{equation*}
    \del_{A_{He^s}} = e^{-s}\del_{A_H} e^s = \del_H + \Upsilon(-s)\del_H s
    \qandq
    \varphi^{*,He^s} = e^{-s}\varphi^{*,H} e^{s}
  \end{equation*}
  with
  \begin{equation*}
    \Upsilon(s) = \frac{e^{\ad_s}-\id}{\ad_s}.
  \end{equation*}
  Set
  \begin{equation*}
    D \coloneq \delbar + i\varphi \qandq \bar D_H \coloneq \del_H - i\varphi^{*,H}.
  \end{equation*}
  The above formula asserts that
  \begin{equation*}
    \bar D_{He^s} = e^{-s}\bar D_H e^{s} = \bar D_H + \Upsilon(-s) \bar D_Hs.
  \end{equation*}
  Since
  \begin{equation*}
    D+\bar D_H = \nabla_{A_H} + i\phi_H,
  \end{equation*}
  we have
  \begin{equation*}
    \fm(H) = \frac12i\Lambda [D,\bar D_H].
  \end{equation*}
  Therefore,
  \begin{align*}
    \inner{\fm(He^s) - \fm(H)}{s}
    &=
      i\Lambda \inner{D (\Upsilon(-s) \bar D_Hs)}{s} \\
    &=
      i\Lambda \delbar \inner{\Upsilon(-s) \bar D_Hs}{s}
      + i\Lambda \Inner{\Upsilon(-s) \bar D_Hs \wedge \bar D_H s} \\
    &=
      \del^* \inner{\bar D_Hs}{\Upsilon(s) s} + \abs{\upsilon(-s) \bar D_Hs}^2 \\
    &=
      \frac12\del^*\del \abs{s}^2 + \abs{\upsilon(-s) \bar D_Hs}^2 \\
    &=
      \frac14\Delta \abs{s}^2 + \frac12\abs{\upsilon(-s) (\nabla_H + i[\phi,\cdot]) s}^2.
      \qedhere
  \end{align*}  
\end{proof}

\begin{proof}[Proof of \autoref{Prop_VHatSR}]
  The first two estimates are elementary.
  To prove the last estimate we argue as follows.  
  Set
  \begin{equation*}
    a \coloneq \nabla_{A_H} - \nabla_{A_\bk} \qandq
    \hat \xi \coloneq \xi_H - \xi_\bk.
  \end{equation*}
  By \autoref{Eq_AHatXi} and since $\Pi_r s$ lies in the kernel of $\fV$,
  for some $\beta > 0$
  \begin{align*}
    \abs{\fV \hat s_r}^2
    &\lesssim
      \abs{\nabla_{A_H} \hat s_r}^2 + \abs{[\xi_H,\hat s_r]}^2 + r^{-2+2\beta} \\
    &\lesssim
      \abs{\nabla_{A_{He^{\Pi_rs}}\hat s_r}}^2 + \abs{[\xi_{He^{\Pi_rs}},\hat s_r]}^2 + r^{-2+2\beta}.
  \end{align*}
  Therefore, it suffices to estimate $\abs{\nabla_{A_{He^{\Pi_rs}}\hat s_r}}^2 + \abs{[\xi_{He^{\Pi_rs}},\hat s_r]}^2$.
  
  Since $\hat s_r$ is bounded,
  $\upsilon(\hat s_r)$ is bounded away from zero.
  Hence, by \autoref{Prop_(KHes-KH,s)} with $He^{\Pi_r s}$ instead of $H$ and $\hat s_r$ instead of $s$,
  \begin{equation*}
    \abs{\nabla_{A_{He^{\Pi_r s}}}\hat s_r}^2 + \abs{[\phi_{He^{\Pi_r}},\hat s_r]}^2
    \lesssim
    \abs{\fm(He^s)} + \abs{\fm(He^{\Pi_r s})} - \Delta \abs{\hat s_r}^2.
  \end{equation*}
  It follows from \autoref{Eq_AHatXi} that $\abs{\fm(H)} = O(r^{-2+\beta})$.
  Moreover, since $\Pi_rs$ lies in the kernel of $\fV$,
  $\abs{\fm(He^{\Pi_r s})} = O(r^{-2+\beta})$.
  Furthermore, $\fm(He^s) = 0$,
  Putting all of the above together yields the asserted estimate.
\end{proof}

\subsection{Proof of \autoref{Prop_AprioriMorreyEstimate}}

Set
\begin{equation*}
  g(r) \coloneq \int_{B_r} \abs{x}^{-1}\abs{\fV s}^2
\end{equation*}
with $\abs{x}$ denoting the distance to the center of the ball $B_r$.
The upcoming three steps show that $g(r) \lesssim r^{2\alpha}$ for some $\alpha > 0$.
This implies the assertion.

\begin{step}
  The function $\abs{x}^{-1}\abs{\fV s}^2$ is integrable;
  in particular: $g \leq c$.  
\end{step}

Fix a smooth function $\chi \co [0,\infty) \to [0,1]$ which is equal to one on $[0,1]$ and vanishes outside $[0,2]$.
Set $\chi_r(\cdot) \coloneq  \chi(\abs{\cdot}/r)$.
Denote by $G$ the Green's function of $B$ centered at $0$.  
For $r > \epsilon > 0$, using \autoref{Prop_VHatSR}, we have
\begin{align*}
  \int_{B_r\setminus B_\epsilon} \abs{x}^{-1}\abs{\fV s}^2
  &\lesssim
    \int_{B_{2r}\setminus B_{\epsilon/2}} \chi_r(1-\chi_{\epsilon/2})G(r^{-2+\beta} - \Delta \abs{\hat s_r}^2) \\
  &\lesssim
    r^{\beta}
    + r^{-3}\int_{B_{2r}\setminus B_r} \abs{\hat s_r}^2
    + \epsilon^{-3}\int_{B_\epsilon\setminus B_{\epsilon/2}} \abs{\hat s_r}^2.
\end{align*}
Since $s$ is bounded, the right-hand side is bounded independent of $\epsilon$.
This proves the integrability of $\abs{x}^{-1}\abs{\fV s}^2$ and the yields a bound on $g$.

\begin{step}
  There are constants $\gamma \in [0,1)$ and $c > 0$ such that
  \begin{equation*}
    g(r) \leq \gamma g(2r) + cr^{\beta}.
  \end{equation*}    
\end{step}

Continue the inequality from the previous step using the Neumann--Poicaré estimate \autoref{Eq_NeumannPoincare} as
\begin{align*}
  \int_{B_r\setminus B_\epsilon} \abs{x}^{-1}\abs{\fV s}^2
  &\lesssim
    r^\beta
    + r^{-3}\int_{B_{2r}\setminus B_r} \abs{s - \Pi_rs}^2
    + \epsilon^{-3}\int_{B_\epsilon\setminus B_{\epsilon/2}} \abs{s - \Pi_rs}^2 \\
  &\lesssim
    r^\beta
    + r^{-1}\int_{B_{2r}\setminus B_r} \abs{\fV s}^2
    + \epsilon^{-1}\int_{B_\epsilon\setminus B_{\epsilon/2}} \abs{\fV s}^2 \\
  &\lesssim
    r^\beta + g(2r) - g(r) + g(\epsilon).
\end{align*}
By Lebesgue's monotone convergence theorem,
the last term vanishes as $\epsilon$ tends to zero.
Therefore,
\begin{equation*}
  g(r) \lesssim g(2r) - g(r) + r^\beta
\end{equation*}

\begin{step}
  For some $\alpha > 0$, $g \lesssim r^{2\alpha}$.
\end{step}

This follows from the preceding steps by an elementary argument;
see, e.g., \cite[Step 3 in the proof of Proposition C.2]{Jacob2016}.
\qed


\appendix

\section{Sequences of Hecke modifications}
\label{Sec_SequencesOfHeckeModifications}

This appendix discusses the extension of \autoref{Thm_EBE=Hecke} to sequences of Hecke modifications.
Let $\Sigma$ be a closed Riemann surface,
let $(\sE_0,\varphi_0)$ be a Higgs bundle over $\Sigma$ of rank $r$,
let $z_1,\ldots,z_n \in \Sigma$,
and let $\bk_1,\ldots,\bk_n \in \Z^r$ satisfying \autoref{Eq_KIncreasing}.

\begin{definition}
  A \defined{sequence of Hecke modifications} of $(\sE_0,\varphi_0)$ at $z_1,\ldots,z_n$ of type $\bk_1,\ldots,\bk_n$ consists of a Hecke modification
  \begin{equation*}
    \eta_i \co (\sE_{i-1},\varphi_{i-1})|_{\Sigma\setminus\set{z_i}} \iso (\sE_i,\varphi_i)|_{\Sigma\setminus\set{z_i}}
  \end{equation*}
  at $z_i$ of type $\bk_i$ for every $i = 1,\ldots,n$.
  An \defined{isomorphism} between two sequences of Hecke modification $(\sE_i,\varphi_i;\eta_i)_{i=1}^n$ and $(\tilde\sE_i,\tilde\varphi_i;\tilde\eta_i)_{i=1}^n$ consists of an isomorphism
  \begin{equation*}
    \zeta_i\co (\sE_i,\varphi_i) \to (\tilde\sE_i,\tilde\varphi_i)
  \end{equation*}
  of Higgs bundles such that
  \begin{equation*}
    \zeta_{i-1}\eta_i = \tilde\eta_i\zeta_i
  \end{equation*}
  for every $i = 1,\ldots,n$ and with $\zeta_0 \coloneq \id_{\sE_0}$.
  We denote by
  \begin{equation*}
    \sM^\Hecke(\sE_0,\varphi_0;z_1,\ldots,z_n,\bk_1,\ldots,\bk_n)
  \end{equation*}
  the set of all isomorphism classes of sequences of Hecke modifications of $(\sE_0,\varphi_0)$  at $z_1,\ldots,z_n$ of type $\bk_1,\ldots,\bk_n$.
\end{definition}

Denote by $E_0$ the complex vector bundle underlying $\sE_0$.
Henceforth, we assume that $H_0$ is a Hermitian metric on $E_0$.
Furthermore, fix
\begin{equation*}
  0 < y_1 < y_2 < \cdots < y_n < 1.
\end{equation*}
As in \autoref{Prop_ConstructE},
there exists a Hermitian vector bundle $(E,H)$ over
\begin{equation*}
  M \coloneq [0,1]\times\Sigma \setminus \set{(y_1,z_1),\ldots,(y_n,z_n)}
\end{equation*}
together with a framing $\Psi_i$ at $(y_i,z_i)$ of type $\bk_i$ for every $i=1,\ldots,n$.
Any two choices of $(E,H;\Psi_1,\ldots,\Psi_n)$ are isomorphic.
Throughout the remainder of this appendix, we fix one such choice.

\begin{definition}
  Denote by $\sC^\EBE(A_0,\phi_0;y_1,z_1,\ldots,y_n,z_n,\bk_1,\ldots,\bk_n)$ the set of triples
  \begin{equation*}
    A \in \sA(E,H), \quad
    \phi \in \Omega^1(M,\fu(E,H)),
    \qandq
    \xi \in \Omega^0(M,\fu(E,H))
  \end{equation*}
  satisfying the extended Bogomolny equation \autoref{Eq_EBE}, as well as
  \begin{equation*}
    i(\del_y)\phi = 0,
  \end{equation*}
  and the boundary conditions
  \begin{equation*}
    A|_{\set{0}\times\Sigma} = A_0, \quad
    \phi|_{\set{0}\times\Sigma} = \phi_0, \qandq
    \xi|_{\set{1}\times\Sigma} = 0.
  \end{equation*}
  Denote by
  \begin{equation*}
    \sG \subset \sG(E,H)
  \end{equation*}
  the subgroup of unitary gauge transformations of $(E,H)$ which are singularity preserving at $(y_1,z_1),\ldots,(y_n,z_n)$ and restrict to the identity on $\set{0}\times \Sigma$.
  Set
  \begin{equation*}
    \sM^\EBE(A_0,\phi_0;y_1,z_1,\ldots,y_n,z_n,\bk_1,\ldots,\bk_n)
    \coloneq
    \sC^\EBE(A_0,\phi_0;y_1,z_1,\ldots,y_n,z_n,\bk_1,\ldots,\bk_n)/\sG.
    \qedhere
  \end{equation*}
\end{definition}

Let $(A,\phi,\xi) \in \sC^\EBE(A_0,\phi_0;y_1,z_1,\ldots,y_n,z_n,\bk_1,\ldots,\bk_n)$.
Let
\begin{equation*}
  y_1 < m_1 < y_2 < m_2 < \ldots < y_n < m_n \coloneq 1.
\end{equation*}
The scattering map construction from \autoref{Sec_ScatteringMap} restricted to $[0,m_1]\times\Sigma$ yields a Hecke modification $(\sE_1,\varphi_1;\eta_1)$ of $(\sE_0,\varphi_0)$ at $z_1$ of type $\bk_1$.
Similarly, we obtain a Hecke modification $(\sE_i,\varphi_i;\eta_1)$ of $(\sE_{i-1},\varphi_{i-1})$ at $z_i$ of type $\bk_i$ for every $i=1,\ldots,n$.
A different choice of $\tilde m_i \in (y_i,y_{i+1})$ may yield a different Hecke modification $(\tilde\sE_i,\tilde\varphi_i;\tilde\eta_i)$.
However, these Hecke modifications are isomorphic via the scattering map from $m_i$ to $\tilde m_i$.
Therefore, we obtain a map
\begin{equation*}
  \sC^\EBE(A_0,\phi_0;y_1,z_1,\ldots,y_n,z_n,\bk_1,\ldots,\bk_n) \to \sM^\Hecke(\sE_0,\varphi_0;z_1,\ldots,z_n,\bk_1,\ldots,\bk_n).
\end{equation*}
This map is $\sG$--invariant.
We have the following extension of \autoref{Thm_EBE=Hecke}.

\begin{theorem}
  \label{Thm_EBE=Hecke_Sequences}
  The map
  \begin{equation*}
    \sM^\EBE(A_0,\phi_0;y_1,z_1,\ldots,y_n,z_n,\bk_1,\ldots,\bk_n) \to     \sM^\Hecke(\sE_0,\varphi_0;z_1,\ldots,z_n,\bk_1,\ldots,\bk_n)
  \end{equation*}
  induced by the scattering map construction is a bijection.
\end{theorem}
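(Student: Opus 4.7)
The strategy is to follow essentially verbatim the proof of \autoref{Thm_EBE=Hecke}, exploiting the fact that the $n$ singularities $(y_1,z_1),\ldots,(y_n,z_n)$ are mutually disjoint, so all the local analysis at each singularity is independent of the others.

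First I would verify that the scattering map construction indeed descends to a well-defined $\sG$--invariant map to $\sM^\Hecke(\sE_0,\varphi_0;z_1,\ldots,z_n,\bk_1,\ldots,\bk_n)$. The $\sG$--invariance is immediate from the fact that gauge transformations in $\sG$ restrict to the identity at $y = 0$ and are singularity preserving at every $(y_i,z_i)$. Independence of the choices $m_1,\ldots,m_{n-1}$ separating the singularities follows because, on any slab $[m_i,\tilde m_i]\times\Sigma$ free of singularities, parallel transport for $\fd_y$ defines an isomorphism of Higgs bundles intertwining the two constructions.

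Next I would set up the parametrization by defining a parametrized sequence of Hecke modifications exactly as in \autoref{Def_ParametrizedHeckeModification}, replacing the single asymptotic condition by one such condition at each of the $n$ singular points. The obvious analogue of \autoref{Prop_ParametrizingHeckeModifications} holds: given any sequence $(\sE_i,\varphi_i;\eta_i)_{i=1}^n$, glue the pullbacks of the bundles $E_i$ on the slabs $[m_{i-1},m_i]\times\Sigma$ (with $m_0 = 0$, $m_n = 1$) via the $\eta_i$ to produce a bundle isomorphic to $E$, equipped with operators $(\delbar,\varphi,\fd_y)$ in temporal gauge whose scattering map realizes the given sequence; injectivity follows from the same temporal-gauge argument as before.

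With the parametrization in hand, the problem reduces as in \autoref{Sec_HermitianMetrics} to the existence and uniqueness of a Hermitian metric $K = u_*H$, $u \in \sG^\C$, such that $\fm(K) = 0$ on $M$ and $\xi_K(1,\cdot) = 0$. Uniqueness follows verbatim from \autoref{Prop_DeltaLogTrExpS} and the maximum principle: $\tr s$ is harmonic on $M$, vanishes at $y=0$, satisfies Neumann conditions at $y=1$, and is bounded near every singularity because $s$ is bounded there (this will be the output of the singularity analysis, applied at each point separately). Existence proceeds via the Simpson heat flow on
\begin{equation*}
  X_\epsilon \coloneq S^1 \times \paren*{[0,1]\times\Sigma \setminus \bigcup_{i=1}^n B_\epsilon(y_i,z_i)},
\end{equation*}
with Dirichlet conditions at $y=0$ and on each of the $n$ disjoint boundary spheres, and Neumann at $y=1$. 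The key ingredient --- positivity of the spectrum of $\Delta$ under these mixed boundary conditions --- is unchanged since the domain is still connected and has at least one Dirichlet face; therefore \autoref{Prop_SimpsonFlow} (including long-time existence, convergence, and uniform $L^\infty$ and interior $C^k$ bounds on $s_\epsilon$) goes through word for word. The same diagonal argument then extracts a limiting $s$ defined on all of $M$.

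The main work, and the only place where anything beyond bookkeeping is required, is the singularity analysis at each $(y_i,z_i)$. But this is a purely local question on a punctured ball around a single singular point, so \autoref{Prop_SingularityAnalysis} applies unchanged at each of the $n$ points: it upgrades the bounded limiting $s$ to the asymptotic expansion $s = s_0^{(i)} + O(r_i^\alpha)$ in $\sG^\C$--gauge near each $(y_i,z_i)$, where $r_i$ is the distance to the $i$--th singularity. This yields $u \in \sG^\C$ with $K = u_*H$ satisfying \autoref{Eq_EBE_Metric} and the Neumann condition at $y=1$, completing the proof.
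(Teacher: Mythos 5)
Your proposal is correct and follows essentially the same route as the paper, which simply observes that the proof of \autoref{Thm_EBE=Hecke} carries over: one extends parametrized Hecke modifications to sequences, reduces via the scattering map to the Hermitian-metric problem, and reruns the heat-flow construction and the (purely local) singularity analysis at each of the $n$ points. In fact you supply more detail than the paper's own sketch, and all of it checks out.
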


\begin{proof}
  The proof is essentially the same as that of \autoref{Thm_EBE=Hecke}.
  The notion of parametrized Hecke modifications can be extended to parametrized sequences of Hecke modifications yielding a moduli space $\sM^\pHecke(\sE_0,\varphi_0;y_1,z_1,\ldots,y_n,z_n,\bk_1,\ldots,\bk_n)$.
  As in the proof of \autoref{Prop_ParametrizingHeckeModifications},
  one shows that the scattering map yields a bijection
  \begin{equation*}
    \sM^\pHecke(\sE_0,\varphi_0;y_1,z_1,\ldots,y_n,z_n,\bk_1,\ldots,\bk_n)
    \to
    \sM^\Hecke(\sE_0,\varphi_0;z_1,\ldots,z_n,\bk_1,\ldots,\bk_n).
  \end{equation*}
  Finally, the arguments from \autoref{Sec_HermitianMetrics}, \autoref{Sec_LiftToDimensionFour}, \autoref{Sec_Uniqueness}, \autoref{Sec_Construction}, and \autoref{Sec_SingularityAnalysis} show that the obvious map
  \begin{equation*}
    \sM^\EBE(A_0,\phi_0;y_1,z_1,\ldots,y_n,z_n,\bk_1,\ldots,\bk_n) \to     \sM^\pHecke(\sE_0,\varphi_0;y_1,z_1,\ldots,y_n,z_n,\bk_1,\ldots,\bk_n)
  \end{equation*}
  is a bijection.
\end{proof}

\begin{remark}
  If $\varphi = 0$, then the above reduces to the notion of a sequence of Hecke modifications of a holomorphic vector bundle;
  see, e.g., \cites[Section 1.5.1]{Wong2013}[Section 2.4]{Boozer2018}.
\end{remark}


\printreferences

\end{document}
